\numberwithin{equation}{section}
\newtheorem{thm}{Theorem}[section]
\newtheorem{prop}[thm]{Proposition}
\newtheorem{lem}[thm]{Lemma}
\newtheorem{cor}[thm]{Corollary}
\theoremstyle{definition}
\theoremstyle{remark}
\newtheorem{rmk}[thm]{Remark}
\newtheorem{ex}[thm]{Example}
\newcommand{\CC}{\mathbb{C}}
\newcommand{\NN}{\mathbb{N}}
\newcommand{\RR}{\mathbb{R}}
\newcommand{\TT}{\mathbb{T}}
\def\Oo{\mathcal{O}}
\def\T{\mathcal{T}}
\newcommand{\Aut}{\operatorname{Aut}}
\newcommand{\im}{{\operatorname{Im}}}
\title[KMS states on the algebras of higher-rank graphs]{\boldmath{KMS states on $C^*$-algebras associated to\\ higher-rank graphs}}
\author[an Huef]{Astrid an Huef}
\author[Laca]{Marcelo Laca}
\author[Raeburn]{Iain Raeburn}
\author[Sims]{Aidan Sims}
\address{Astrid an Huef and Iain Raeburn\\ Department of Mathematics and Statistics\\University of Otago\\PO Box 56\\Dunedin 9054\\New Zealand}
\email{astrid@maths.otago.ac.nz, iraeburn@maths.otago.ac.nz}
\address{Marcelo Laca\\ Department of Mathematics and Statistics\\
University of Victoria\\
Victoria, BC V8W 3P4\\
Canada}
\email{laca@math.uvic.ca}
\address{Aidan  Sims\\ School of Mathematics and Applied Statistics\\
University of Wollongong\\NSW 2522\\Australia}
\email{asims@uow.edu.au}
\date{\today}
\subjclass[2010]{46L30, 46L55}
\thanks{This research has been supported by the University of Otago, the Marsden Fund of the Royal Society of New Zealand, the Natural Sciences and Engineering Research Council of Canada, and the Australian Research Council.}
\begin{document}
\maketitle

\begin{abstract}

Consider a higher-rank graph of rank $k$. Both the Cuntz-Krieger algebra and the Toeplitz-Cuntz-Krieger algebra of the graph carry natural gauge actions of the torus $\TT^k$, and restricting these gauge actions to one-parameter subgroups of $\TT^k$ gives dynamical systems involving actions of the real line. We study the KMS states of these dynamical systems. We find that for large inverse temperatures $\beta$, the simplex of KMS$_\beta$ states on the Toeplitz-Cuntz-Krieger algebra has dimension $d$ one less than the number of vertices in the graph. We also show that there is a preferred dynamics for which there is a critical inverse temperature $\beta_c$: for $\beta$ larger than $\beta_c$, there is a $d$-dimensional simplex of KMS states; when $\beta=\beta_c$ and the one-parameter subgroup is dense, there is a unique KMS state, and this state factors through the Cuntz-Krieger algebra. As in previous studies for $k=1$, our main tool is the Perron-Frobenius theory for irreducible nonnegative matrices, though here we need a version of the theory for commuting families of matrices.

\end{abstract}

\section{Introduction}

Every Cuntz-Krieger algebra carries a natural gauge action of the circle, which we can lift to $\RR$ to get a natural dynamics. Enomoto, Fujii and Watatani proved in 1984 that when $\Oo_A$ is a simple Cuntz-Krieger algebra, this dynamics admits a single KMS state, which occurs at inverse temperature $\ln\rho(A)$, where $\rho(A)$ is the spectral radius of $A$ \cite{EnomotoFujiiEtAl:MJ84}. Their result has been extended to the Cuntz-Krieger algebras of infinite matrices \cite{EL}, and more recently to the graph algebras of finite graphs with sources \cite{KW, aHLRSk=1}. We now know also that the Toeplitz-Cuntz-Krieger algebras of matrices and graphs have a much richer supply of KMS states \cite{EL, LN, aHLRSk=1}.

Higher-rank graphs (or \emph{$k$-graphs}) were introduced by Kumjian and Pask \cite{KP} as combinatorial models for the higher-rank
Cuntz-Krieger algebras of \cite{RobertsonSteger:JRAM99}, and their $C^*$-algebras have generated a great deal of interest. In particular, Davidson and
Yang have made a detailed study of $C^*$-algebras associated to
finite $2$-graphs with a single vertex \cite{DY, Yang:xx09, Yang}, and recently Yang \cite{Yang:xx09} has studied KMS
states for a 1-parameter subgroup of the gauge action of the $2$-torus on the $C^*$-algebra $C^*(\Lambda)$ of such a graph $\Lambda$. She has shown that, if $\Lambda$ has $m$ blue edges and $n$ red ones, and if $\ln m$ is not a rational multiple of $\ln n$, then $C^*(\Lambda)$ has a unique KMS state.

Here we consider the dynamics obtained by restricting the gauge action of $\TT^k$ to one-parameter subgroups of $\TT^k$, and we compute the KMS states on both the Toeplitz algebra and $C^*$-algebra of a finite $k$-graph $\Lambda$ with no sources. For the Toeplitz algebra $\T C^*(\Lambda)$ and large inverse temperatures $\beta$, the KMS$_\beta$ states form a simplex of dimension $|\Lambda^0|-1$ (Theorem~\ref{mainthmk}). For smaller $\beta$, it matters which $1$-parameter subgroup we choose, and there is a preferred dynamics for which we obtain the best possible results. To describe it, we consider the vertex matrices $A_i$ of the subgraphs containing just the edges of degree $e_i$ (that is, the individually coloured subgraphs of the skeleton); for the graphs in the previous paragraph, for example, we recover $A_1=(m)$ and $A_2=(n)$. Then the preferred dynamics is the restriction of the gauge action to the subgroup
$\{\prod_{i=1}^k\rho(A_i)^{it}:t\in \RR\}$.

For the preferred dynamics, Theorem~\ref{mainthmk} describes a $(|\Lambda^0|-1)$-dimensional simplex of KMS$_\beta$ states for every $\beta>1$. When the numbers $\ln\rho(A_i)$ are rationally independent, there is a unique KMS$_1$ state, which factors through a state of the graph algebra $C^*(\Lambda)$ (Theorem~\ref{criticalbeta}).

Our approach is a refinement of the one developed in \cite{LR} and \cite{LRR}, which we previously applied to the $C^*$-algebras of finite graphs with sources in \cite{aHLRSk=1}. The first task in this approach is to find a characterisation of KMS$_\beta$ states which will enable us to recognise them in the wild. When the $1$-parameter subgroup determining the action is dense in $\TT^k$, this is relatively straightforward (Proposition~\ref{idKMSbetak}): the outcome is, roughly, that a KMS state $\phi$ is determined by its values on the vertex projections $\{q_v:v\in \Lambda^0\}$  of the generating Toeplitz-Cuntz-Krieger family $\{t,q\}$. The Toeplitz-Cuntz-Krieger relations then impose restrictions on the vector $m^\phi:=(\phi(q_v))\in [0,\infty)^k$, which amount to saying that $m^\phi$ is subinvariant for all the matrices $A_i$, in the sense of Perron-Frobenius theory \cite{Seneta}. When $\phi$ is a state on $C^*(\Lambda)$, this subinvariance becomes invariance (Proposition~\ref{propsofm}). For graphs such that the matrices $A_i$ are irreducible, the Perron-Frobenius theory puts lower bounds on the possible inverse temperatures $\beta$ (Corollary~\ref{restrictbeta}), and these lower bounds are all the same for the preferred dynamics (see Remark~\ref{pref}).

In \S\ref{sec:largebeta}, we consider the KMS$_\beta$ states on the Toeplitz algebra for the action determined by an arbitrary subgroup of $\TT^k$. We show that, even when this subgroup is not dense, KMS$_\beta$ states are still invariant under the full gauge action of $\TT^k$ (Theorem~\ref{improvedidKMS}; see also Remark~\ref{extrasym}). This invariance was easy to establish in the case $k=1$ (see Proposition~2.1(a) in \cite{aHLRSk=1}), but seems to require a rather delicate analysis for $k>1$. The extra invariance plays a crucial role when we prove surjectivity of our parametrisation of the KMS states on the Toeplitz algebra (Theorem~\ref{mainthmk}). As in \cite{aHLRSk=1}, this parametrisation includes concrete descriptions of the KMS states in terms of the path representation of $\T C^*(\Lambda)$ on $\ell^2(\Lambda)$.

In \S\ref{sec:CK-KMS}, we consider the preferred dynamics $\alpha$ under the assumption that the numbers $\ln\rho(A_i)$ are rationally independent. We show that there is then a unique KMS$_1$ state on $(\T C^*(\Lambda),\alpha)$, that this state factors through a state of the graph algebra $C^*(\Lambda)$, and that the resulting state of $C^*(\Lambda)$ is the only KMS state of $(C^*(\Lambda),\alpha)$. We show by example that uniqueness can fail if the $\ln\rho(A_i)$ are rationally dependent. We finish with a short analysis of the ground states of our systems.

\section{Background}

\subsection{Perron-Frobenius for commuting matrices}\label{PFback} Let $X$ be a finite set. A matrix $A \in M_X(\CC)$ is
\emph{irreducible} if for all $x,y \in X$ there exists $n \in \NN$ such that $A^n(x,y) \not= 0$. We
say that a matrix is positive (non-negative) if all its entries are positive (non-negative). A vector in
$[0,\infty)^X$ is \emph{subinvariant} for $A \in M_X(\CC)$ and $\lambda \in \RR$ if $Am\leq \lambda m$ pointwise.

Let $A$ be an irreducible matrix with non-negative entries. The Perron-Frobenius theorem \cite[Theorem~1.5]{Seneta} says that $A$ has an eigenvalue $\rho_A>0$ with a positive
eigenvector $x$, that $\rho_A$ is equal to the spectral radius $\rho(A)$ of $A$, and  that $\rho_A$
is a simple root of the characteristic polynomial of $A$. The last statement in \cite[Theorem~1.6]{Seneta} implies that every non-negative eigenvector of $A$ has eigenvalue $\rho(A)$. Thus $A$ has a unique
non-negative eigenvector $x$ whose $\ell^1$-norm is $1$; we call $x$ the \emph{unimodular Perron-Frobenius eigenvector} of $A$. 

We need the following
version of the Perron-Frobenius theorem for commuting matrices.

\begin{lem}\label{lem:common F-B evctr}
Suppose that $A$ and $B$ are commuting irreducible $n \times n$ matrices with non-negative entries.
Then their unimodular Perron-Frobenius eigenvectors are equal.
\end{lem}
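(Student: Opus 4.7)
The plan is to exploit commutativity to show that the unimodular Perron--Frobenius eigenvector of $A$ is \emph{also} a non-negative eigenvector of $B$, and then invoke uniqueness from the cited Perron--Frobenius results.

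First I would let $x$ be the unimodular Perron--Frobenius eigenvector of $A$, so that $Ax = \rho(A)x$, $x$ is strictly positive, and $\|x\|_1 = 1$. Using $AB = BA$, I would compute
\[
A(Bx) = B(Ax) = \rho(A)(Bx),
\]
which says that $Bx$ lies in the $\rho(A)$-eigenspace of $A$. The paragraph preceding the lemma recalls from \cite[Theorem~1.5]{Seneta} that $\rho(A)$ is a simple root of the characteristic polynomial of $A$, so this eigenspace is one-dimensional. Hence $Bx = \lambda x$ for some scalar $\lambda$.

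Next I would argue that $\lambda$ must equal $\rho(B)$. Since $B$ has non-negative entries and $x$ is strictly positive, $Bx$ has non-negative entries, so $\lambda \geq 0$; thus $x$ is a non-negative eigenvector of the irreducible non-negative matrix $B$. The last statement in \cite[Theorem~1.6]{Seneta}, as quoted in the background, forces the eigenvalue of any such eigenvector to be $\rho(B)$, so $\lambda = \rho(B)$. Combined with $\|x\|_1 = 1$, this exhibits $x$ as \emph{a} non-negative, $\ell^1$-normalised eigenvector of $B$ for $\rho(B)$. The uniqueness clause in the definition of the unimodular Perron--Frobenius eigenvector of $B$ then identifies $x$ with the unimodular Perron--Frobenius eigenvector of $B$, as required.

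I do not expect any real obstacle here: the only subtlety is to make sure we have the right uniqueness statement for non-negative eigenvectors of an irreducible non-negative matrix, which is precisely what the preceding paragraph extracts from \cite[Theorems~1.5 and~1.6]{Seneta}. Nothing beyond commutativity and standard Perron--Frobenius is needed; in particular no choice of norm on matrices, no spectral mapping, and no appeal to positivity of $\rho(A)$ or $\rho(B)$ beyond what the irreducibility hypothesis already provides.
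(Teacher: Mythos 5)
Your proposal is correct and follows essentially the same route as the paper: use commutativity to show $Bx$ lies in the $\rho(A)$-eigenspace of $A$, deduce $Bx=\lambda x$, and then apply \cite[Theorem~1.6]{Seneta} to identify $\lambda$ with $\rho(B)$. The only cosmetic difference is that you justify $Bx\in\CC x$ via simplicity of the root $\rho(A)$, whereas the paper invokes the uniqueness of non-negative eigenvectors of $A$ applied to the non-negative vector $Bx$; both are valid.
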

\begin{proof}
Let $x$ be the unimodular Perron-Frobenius eigenvector for $A$. We have
\[
ABx = BAx = \rho(A)Bx.
\]
Hence $Bx$ is an eigenvector of $A$. Since $B$ has non-negative entries and $x$ is a positive
vector, $Bx$ is  non-negative. The Perron-Frobenius theorem for $A$ implies that $Bx$ is a
scalar multiple of $x$, so $x$ is an eigenvector of $B$ also. Since $x$ is positive, applying \cite[Theorem~1.6]{Seneta} to $B$ shows that $Bx=\rho(B)x$, and since $x$ has
$\ell^1$-norm 1, $x$ is also the unique
unimodular non-negative eigenvector for $B$.
\end{proof}

\subsection{Higher-rank graphs and their $C^*$-algebras}
Let $\Lambda$ be a $k$-graph with vertex set $\Lambda^0$ and degree functor $d:\Lambda\to \NN^k$, as in \cite{KP}. We are only interested in $k$-graphs which are finite in the sense that $\Lambda^n:=d^{-1}(n)$ is finite for all $n\in \NN^k$. We use the convention that $v\Lambda^n w$, for example, denotes the set $\{\lambda\in \Lambda^n:r(\lambda)=v\text{ and }s(\lambda)=w\}$. For a pair $(\lambda,\mu)$ of paths in
$\Lambda$, we write $\Lambda^{\min}(\lambda,\mu)$ for the set of pairs $(\eta,\zeta)$ in $\Lambda$
such that $\lambda\eta=\mu\zeta$ and $d(\lambda\eta)=d(\lambda)\vee d(\mu)$.

For $1\leq i\leq k$, we write $A_i$ for the \emph{vertex matrix} with entries
$A_i(v,w)=|v\Lambda^{e_i}w|$. Since $(A_iA_j)(v,w)$ is the number of paths of degree $e_i+e_j$ from $w$ to $v$, the
factorisation property implies that $A_iA_j=A_jA_i$. So it makes sense to define
$A^n:=\prod_{i=1}^k A_i^{n_i}$ for $n\in \NN^k$, and then 
$A^n(v,w)$ is the number $|v\Lambda^nw|$ of paths from $w$ to $v$ of degree $n$. 

\begin{lem}\label{resolventk}
Suppose that $\Lambda$ is a finite $k$-graph with no sources, and that $\beta\in [0,\infty)$ and
$r\in (0,\infty)^k$ satisfy $\beta r_i > \ln \rho(A_i)$ for $1\leq i\leq k$. View $\NN^k$ as a directed
set with $m\leq n\Longleftrightarrow m_i\leq n_i$ for all $i$. Then the series $\sum_{n\in\NN^k}
e^{-\beta r\cdot n}A^n$ converges in the operator norm to $\prod_{i=1}^k(1-e^{-\beta
r_i}A_i)^{-1}$.
\end{lem}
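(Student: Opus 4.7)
The plan is to reduce the multi-indexed sum over $\NN^k$ to a product of ordinary Neumann series, using the crucial fact that the $A_i$ mutually commute, so $A^n = \prod_{i=1}^k A_i^{n_i}$.

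First, I would handle each coordinate separately. Fix $i \in \{1, \ldots, k\}$. The hypothesis $\beta r_i > \ln \rho(A_i)$ is equivalent to $\rho(e^{-\beta r_i} A_i) < 1$. Applying Gelfand's formula in the finite-dimensional Banach algebra $M_{\Lambda^0}(\CC)$, I find that $\|(e^{-\beta r_i} A_i)^{n_i}\|^{1/n_i}$ tends to a number strictly less than $1$, so the series $\sum_{n_i=0}^\infty e^{-\beta r_i n_i} A_i^{n_i}$ is absolutely convergent in operator norm. A standard telescoping identity then identifies its sum as the inverse of $1 - e^{-\beta r_i} A_i$.

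Next, for any $N \in \NN^k$, commutativity of the $A_i$ together with distributivity give the factorisation
\begin{equation*}
\sum_{n \leq N} e^{-\beta r \cdot n} A^n = \prod_{i=1}^k \Bigl(\sum_{n_i=0}^{N_i} e^{-\beta r_i n_i} A_i^{n_i}\Bigr).
\end{equation*}
As $N \to \infty$ in the directed set $\NN^k$, every $N_i \to \infty$, so by the first step each factor on the right converges in operator norm to $(1 - e^{-\beta r_i} A_i)^{-1}$. Since multiplication in the finite-dimensional algebra $M_{\Lambda^0}(\CC)$ is jointly continuous, the product converges to $\prod_{i=1}^k (1 - e^{-\beta r_i} A_i)^{-1}$, as required.

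There is no real obstacle: once commutativity is used to recognise the partial sums as products, the argument reduces to the one-variable Neumann series in each coordinate, glued together by continuity of matrix multiplication on $M_{\Lambda^0}(\CC)$.
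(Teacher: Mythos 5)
Your proof is correct and follows essentially the same route as the paper: factor the partial sum over $\{n \le N\}$ into a product of one-variable partial Neumann series using commutativity of the $A_i$, converge each factor to $(1-e^{-\beta r_i}A_i)^{-1}$ (the paper cites Dunford--Schwartz where you invoke Gelfand's formula, but the content is identical), and pass to the limit along the directed set $\NN^k$ using continuity of multiplication.
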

\begin{proof}
Since the vertex matrices $A_i$ commute, the $N$th partial sum is
\[
\sum_{0\leq n\leq N} e^{-\beta r\cdot n}A^n
    = \sum_{0\leq n\leq N}\prod_{i=1}^k e^{-\beta r_in_i}A_i^{n_i}
    = \prod_{i=1}^k\Big(\sum_{n_i=0}^{N_i}e^{-\beta r_i}A_i^{n_i}\Big).
\]
For each $i$ we have $\beta r_i>\rho(A_i)$, and hence $\sum_{n_i=0}^{N_i}e^{-\beta
r_in_i}A_i^{n_i}$ converges to $(1-e^{-\beta r_i}A_i)^{-1}$ in the operator norm as $N_i\to \infty$
(by, for example, \cite[Lemma~VII.3.4]{DS}). Thus as $N\to \infty$ in $\NN^k$, which means
$N_i\to \infty$ for all $i$, the product converges in the operator norm to
$\prod_{i=1}^k(1-e^{-\beta r_i}A_i)^{-1}$.
\end{proof}

Following the spirit of \cite[\S7]{RS}, a \emph{Toeplitz-Cuntz-Krieger $\Lambda$-family} $\{T,Q\}$ consists
of partial isometries $\{T_\lambda:\lambda\in\Lambda\}$ such that
\begin{itemize}
\item[(T1)] $\{Q_v:=T_v:v\in \Lambda^0\}$ are mutually orthogonal projections;
\item[(T2)] $T_\lambda T_\mu=T_{\lambda\mu}$ whenever $s(\lambda)=r(\mu)$;
\item[(T3)] $T_\lambda^*T_\lambda=Q_{s(\lambda)}$ for all $\lambda$;
\item[(T4)] for all $v\in \Lambda^0$ and $n\in\NN^k$, we have
\begin{equation*}
Q_v\geq\sum_{\lambda\in v\Lambda^n} T_\lambda T_\lambda^*;
\end{equation*}
\item[(T5)] for all $\mu,\nu\in \Lambda$, we have (interpreting any empty sums as $0$)
\begin{equation}\label{Nicacov}
T_\mu^*T_\nu=\sum_{(\eta,\zeta)\in \Lambda^{\min}(\mu,\nu)} T_\eta T_\zeta^*.
\end{equation}
\end{itemize}
The relation (T4) implies that the range projections $\{T_\lambda T_\lambda^*:d(\lambda)=n\}$ are mutually orthogonal for each $n$ (by the argument of \cite[Corollary~1.2]{aHLRSk=1}). A Toeplitz-Cuntz-Krieger $\Lambda$-family is a \emph{Cuntz-Krieger $\Lambda$-family} if in addition  we have
\begin{itemize}\item[(CK)] $Q_v=\sum_{\lambda\in v\Lambda^n} T_\lambda T_\lambda^*$ for all $v\in \Lambda^0$ and $n\in \NN^k$.
\end{itemize}

\begin{rmk}
The relations (T1)--(T3) and (CK) imply (T5) \cite[Lemma~3.1]{KP}, but in the absence of (CK) we
need to impose (T5) to ensure that the $C^*$-algebra generated by $\{T_\lambda\}$ is spanned by
elements of the form $T_\lambda T_\mu^*$. Proposition~6.4 of \cite{RS} says that (T5) is equivalent
to Nica covariance of an associated representation of a product system of Hilbert bimodules over
$C(\Lambda^0)$, as in \cite{F}.
\end{rmk}

The Toeplitz algebra $\T C^*(\Lambda)$ is generated by a universal Toeplitz-Cuntz-Krieger family
$\{t_\lambda,q_v\}$. It can be constructed as the Nica-Toeplitz algebra of a product system of Hilbert
bimodules, as in \cite[Corollary~7.5]{RS}. The universal property shows that there is a gauge
action $\gamma$ of $\TT^k$ on $\T C^*(\Lambda)$ such that $\gamma_z(t_\lambda)=z^{d(\lambda)}t_\lambda$ (using
multi-index notation).

For a specific example of a Toeplitz-Cuntz-Krieger family, consider the usual orthonormal basis
$\{h_\lambda:\lambda\in\Lambda\}$ for $\ell^2(\Lambda)$, and the operators $Q_v$, $T_\mu$ such that
\[
T_\mu h_\lambda=\begin{cases} h_{\mu\lambda}&\text{if $s(\mu)=r(\lambda)$}\\
0&\text{otherwise} \end{cases}
\quad\text{and}\quad Q_v h_\lambda=\begin{cases} h_{\lambda}&\text{if $v=r(\lambda)$}\\
0&\text{otherwise.} \end{cases}
\]
It is proved in \cite[Example~7.4]{RS} that $(T,Q)$ is a Toeplitz-Cuntz-Krieger $\Lambda$-family,
and in \cite[Corollary~7.7]{RS} that the associated representation $\pi_{T,Q}$ of $\T C^*(\Lambda)$
is faithful. We call $\pi_{T,Q}$ the \emph{path representation} of $\T C^*(\Lambda)$.

\section{Properties of KMS states}\label{sec:charKMS}

Suppose that $\alpha$ is an action of $\RR$ on a $C^*$-algebra $A$. An element $a\in A$ is \emph{analytic} for $\alpha$ if $t\mapsto\alpha_t(a)$ is the restriction to $\RR$ of an analytic function $z\mapsto\alpha_z(a)$ defined on all of $\CC$.  A state $\phi$ on $A$ is a \emph{KMS state with inverse temperature $\beta\in (0,\infty)$} (or \emph{KMS$_\beta$ state}) of $(A,\alpha)$ if 
\begin{equation}\label{defKMS}
\phi(ab)=\phi(b\alpha_{i\beta}(a))\quad\text{for all analytic elements $a, b$;}
\end{equation}
$\phi$ is a \emph{ground state} of $(A,\alpha)$ if $z\mapsto \phi(a\alpha_z(b))$ is bounded in the upper half-plane $\im z>0$ for all analytic $a, b$. By convention, the KMS$_0$ states are the invariant traces, and the KMS$_\infty$ states are the weak* limits of sequences of KMS$_{\beta_n}$ states as $\beta_n\to \infty$. KMS states and ground states are always $\alpha$-invariant \cite[Proposition~8.12.4]{P}.

We know on general grounds that the analytic elements always form a dense subalgebra of $A$ \cite[page~363]{P}. But in practice it seems to be easy to find plenty of analytic elements, and this is all we have to do: by \cite[Proposition~8.12.3]{P}, it suffices to check the KMS and ground-state conditions on a set of analytic elements which span a dense subspace of $A$. In the systems $(\T C^*(\Lambda),\alpha)$ of interest to us, we have $\alpha_t(t_\mu t_\nu^*)=e^{itr\cdot(d(\mu)-d(\nu))}t_\mu t_\nu^*$ for some $r\in (0,\infty)^k$, and $z\mapsto e^{izr\cdot(d(\mu)-d(\nu))}t_\mu t_\nu^*$ is an analytic function which extends $t\mapsto \alpha_t(t_\mu t_\nu^*)$. So it suffices for us to check the KMS condition \eqref{defKMS} on the elements $t_\mu t_\nu^*$. 

We begin our investigation of KMS states on $\T C^*(\Lambda)$ by giving a characterisation of KMS states that will make them easier to identify.

\begin{prop}\label{idKMSbetak}
Suppose that $\Lambda$ is a finite $k$-graph with no sources. Let $r \in (0,\infty)^k$, let $\gamma : \TT^k \to \Aut(\T C^*(\Lambda))$ be the gauge action, and
define $\alpha : \RR \to \Aut(\T C^*(\Lambda))$ by $\alpha_t = \gamma_{e^{itr}}$. Suppose that $\beta\in
[0,\infty)$ and $\phi$ is a state on $\T C^*(\Lambda)$.
\begin{enumerate}
\item\label{prepreak} If $\phi$ is a KMS$_\beta$ state of $(\T C^*(\Lambda),\alpha)$, then
\begin{equation}\label{propKMSbetak}
    \phi(t_\mu t_\nu^*)=\delta_{\mu,\nu}e^{-\beta r\cdot d(\mu)}\phi(q_{s(\mu)})\quad\text{for all $\mu,\nu\in \Lambda$ with $d(\mu)=d(\nu)$.}
    \end{equation}
\item\label{preak} If
    \begin{equation}\label{charKMSbetak}
    \phi(t_\mu t_\nu^*)=\delta_{\mu,\nu}e^{-\beta r\cdot d(\mu)}\phi(q_{s(\mu)})\quad\text{for all $\mu,\nu\in \Lambda$,}
    \end{equation}
then $\phi$ is a KMS$_\beta$ state of $(\T
    C^*(\Lambda),\alpha)$.     If $r \in (0,\infty)^k$ has rationally
independent coordinates, then $\phi$ is a KMS$_\beta$ state of $(\T
    C^*(\Lambda),\alpha)$ if and only if \eqref{charKMSbetak} holds.
\item\label{groundk} A state $\phi$ on $\T C^*(\Lambda)$ is a ground state of $(\T
    C^*(\Lambda), \alpha)$ if and only if
    \begin{equation}\label{chargroundk}
    \phi(t_\mu t_\nu^*) = 0 \text{ whenever $r\cdot d(\mu) > 0$ or $r\cdot d(\nu) > 0$.}
    \end{equation}
\end{enumerate}
\end{prop}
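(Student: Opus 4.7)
Throughout, I note that KMS states and ground states are automatically $\alpha$-invariant (by \cite[Proposition~8.12.4]{P}), and that the KMS and ground-state conditions need only be checked on a spanning set of analytic elements (by \cite[Proposition~8.12.3]{P}). The elements $t_\mu t_\nu^*$ span $\TCE$, and $\alpha_z(t_\mu t_\nu^*)=e^{izr\cdot(d(\mu)-d(\nu))}t_\mu t_\nu^*$ provides an entire analytic continuation, so it suffices to work with these generators throughout. For part (a), I would apply the KMS condition with $a=t_\mu$ and $b=t_\nu^*$ to get $\phi(t_\mu t_\nu^*)=\phi(t_\nu^*\alpha_{i\beta}(t_\mu))=e^{-\beta r\cdot d(\mu)}\phi(t_\nu^* t_\mu)$, and then expand $t_\nu^* t_\mu$ via (T5). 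When $d(\mu)=d(\nu)$, the defining condition $d(\nu\eta)=d(\nu)\vee d(\mu)=d(\nu)$ on $(\eta,\zeta)\in\Lambda^{\min}(\nu,\mu)$ forces $d(\eta)=d(\zeta)=0$, so $\eta,\zeta$ are vertices and $\nu=\mu$ is necessary, giving $t_\nu^* t_\mu=\delta_{\mu,\nu}q_{s(\mu)}$ and \eqref{propKMSbetak}.

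For the first half of (b), I would verify the KMS condition on generators $a=t_\mu t_\nu^*$ and $b=t_\sigma t_\tau^*$, where $\alpha_{i\beta}$-applied reduces the condition to
\[
\phi(t_\mu t_\nu^* t_\sigma t_\tau^*)=e^{-\beta r\cdot(d(\mu)-d(\nu))}\phi(t_\sigma t_\tau^* t_\mu t_\nu^*).
\]
Applying (T5) to $t_\nu^* t_\sigma$ on the left and to $t_\tau^* t_\mu$ on the right expresses each side as a sum over $\Lambda^{\min}(\nu,\sigma)$ and $\Lambda^{\min}(\tau,\mu)$ respectively, and the hypothesis \eqref{charKMSbetak} annihilates all but the ``diagonal'' terms (those where $\mu\eta=\tau\zeta$, resp.\ $\sigma\eta'=\nu\zeta'$). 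The key combinatorial step is that the flip $(\eta,\zeta)\mapsto(\zeta,\eta)$ bijects the diagonal subset of $\Lambda^{\min}(\nu,\sigma)$ with the diagonal subset of $\Lambda^{\min}(\tau,\mu)$; checking that it lands in $\Lambda^{\min}(\tau,\mu)$ uses the symmetry of $\vee$ and the identity $d(\tau)-d(\sigma)=d(\mu)-d(\nu)$. The scalars then match via the degree identity
\[
d(\mu\eta)=d(\mu)-d(\nu)+d(\sigma\zeta),
\]
which follows from $\nu\eta=\sigma\zeta$, and the vertex projections agree because $s(\eta)=s(\nu\eta)=s(\sigma\zeta)=s(\zeta)$. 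For the second half of (b), rational independence of $r$ makes the 1-parameter subgroup $\{e^{itr}:t\in\RR\}$ dense in $\TT^k$ by Kronecker, so $\alpha$-invariance upgrades to full $\gamma$-invariance; then Haar-averaging over $\TT^k$ gives $\phi(t_\mu t_\nu^*)=0$ unless $d(\mu)=d(\nu)$, and part (a) handles the remaining case.

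For (c), the ``only if'' direction combines two constraints on $\phi(t_\mu t_\nu^*)$ when $\phi$ is a ground state. First, testing boundedness on the analytic pair $a=t_\mu$, $b=t_\nu^*$ gives $\phi(a\alpha_z(b))=e^{-izr\cdot d(\nu)}\phi(t_\mu t_\nu^*)$, whose magnitude on $\im z>0$ is $e^{(\im z)\,r\cdot d(\nu)}|\phi(t_\mu t_\nu^*)|$; since $r\cdot d(\nu)\geq 0$, boundedness forces $\phi(t_\mu t_\nu^*)=0$ whenever $r\cdot d(\nu)>0$. Second, $\alpha$-invariance forces $\phi(t_\mu t_\nu^*)=0$ whenever $r\cdot d(\mu)\neq r\cdot d(\nu)$. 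Together, these force $\phi(t_\mu t_\nu^*)=0$ whenever $r\cdot d(\mu)>0$ or $r\cdot d(\nu)>0$. For the ``if'' direction, expanding
\[
\phi(t_\mu t_\nu^*\alpha_z(t_\sigma t_\tau^*))=e^{izr\cdot(d(\sigma)-d(\tau))}\phi(t_\mu t_\nu^* t_\sigma t_\tau^*)
\]
via (T5) and applying \eqref{chargroundk} to each term shows that nonzero terms require $r\cdot d(\tau\zeta)=0$ and hence $r\cdot d(\tau)=0$, so the prefactor reduces to $e^{izr\cdot d(\sigma)}$, which is bounded on the upper half-plane since $r\cdot d(\sigma)\geq 0$.

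I expect the main obstacle to be the combinatorial verification in part (b): setting up the correct bijection between the diagonal subsets of $\Lambda^{\min}(\nu,\sigma)$ and $\Lambda^{\min}(\tau,\mu)$ and checking that the exponents of the KMS scalars match exactly term-by-term. The rest reduces to careful bookkeeping with degrees, the $\vee$ operation, and the factorisation property, but the other ingredients (KMS applied to the right pair of generators, Kronecker density, boundedness on the upper half-plane) are essentially off-the-shelf.
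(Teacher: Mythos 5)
Your proposal is correct, and its overall architecture is the same as the paper's: reduce to the spanning elements $t_\mu t_\nu^*$, expand products via (T5), kill the off-diagonal terms using \eqref{charKMSbetak}, and match the surviving terms of $\phi(ab)$ and $\phi(b\alpha_{i\beta}(a))$ via the flip $(\eta,\zeta)\mapsto(\zeta,\eta)$ between the diagonal subsets of $\Lambda^{\min}(\nu,\sigma)$ and $\Lambda^{\min}(\tau,\mu)$. You differ from the paper in three localized sub-steps, all legitimately. First, to see that the flip lands in $\Lambda^{\min}(\tau,\mu)$, the paper isolates a lemma (if $m+p=n+q$ then $m+p=m\vee n$ iff $p\wedge q=0$) and transfers the condition $d(\eta)\wedge d(\zeta)=0$ from one minimality statement to the other; your route via the identity $d(\tau)-d(\sigma)=d(\mu)-d(\nu)$ (valid for diagonal terms) works because $\vee$ commutes with translation by a constant vector, so $d(\tau)\vee d(\mu)=(d(\sigma)\vee d(\nu))+c=d(\tau)+d(\zeta)$ -- arguably slicker, and worth writing out since it is the one place where hand-waving could hide an error. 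Second, for the converse in (b) the paper applies the KMS condition twice to get $\phi(t_\mu t_\nu^*)=e^{-\beta r\cdot(d(\mu)-d(\nu))}\phi(t_\mu t_\nu^*)$ and concludes from $r\cdot d(\mu)\neq r\cdot d(\nu)$, whereas you invoke Kronecker density of $\{e^{itr}\}$ in $\TT^k$ and upgrade $\alpha$-invariance to $\gamma$-invariance (the mechanism the paper itself highlights later in Remark~\ref{extrasym}); your version has the small advantage of covering $\beta=0$ without a separate remark. Third, in the ``only if'' half of (c) the paper deduces the $r\cdot d(\mu)>0$ case from $\phi(t_\nu t_\mu^*)=\overline{\phi(t_\mu t_\nu^*)}$, while you combine the $r\cdot d(\nu)>0$ case with $\alpha$-invariance; both close the gap. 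The remaining steps (part (a), the ``if'' half of (c)) coincide with the paper's.
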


\begin{proof}[Proof of Proposition~\ref{idKMSbetak}\,\eqref{prepreak}]
If $d(\mu)=d(\nu)$, then $t_\nu^*t_\mu=\delta_{\mu,\nu}q_{s(\mu)}$, and the KMS condition gives
\[
\phi(t_\mu t_\nu^*)=e^{-\beta r\cdot d(\mu)}\phi(t_\nu^*t_\mu)=\delta_{\mu,\nu}e^{-\beta r\cdot d(\mu)}\phi(q_{s(\mu)}),
\]
which is \eqref{propKMSbetak}.
\end{proof}

To prove part~(\ref{preak}) we need the following property of the partial order on $\NN^k$.

\begin{lem}\label{minimal}
Suppose that $m,n,p,q\in \NN^k$ satisfy $m+p=n+q$. Then $m+p=m\vee n$ if and only if $p\wedge q=0$.
\end{lem}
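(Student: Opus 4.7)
The plan is to reduce the lemma to a scalar statement on $\NN$ and then dispatch it by a short case analysis. Since addition, the partial order, and the lattice operations $\vee$ and $\wedge$ on $\NN^k$ are all defined coordinate-wise, the biconditional $m+p = m\vee n \Longleftrightarrow p\wedge q = 0$ holds in $\NN^k$ precisely when it holds in each coordinate. So I would fix an index $i$, write $a = m_i$, $b = n_i$, $c = p_i$, $d = q_i$ (nonnegative integers satisfying $a+c = b+d$), and prove the scalar equivalence $a+c = \max(a,b) \Longleftrightarrow \min(c,d) = 0$.

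For the forward direction I would split on whether $a \geq b$ or $a < b$. If $a \geq b$, then $\max(a,b) = a$, so $a+c = a$ forces $c = 0$, hence $\min(c,d) = 0$. If $a < b$, then $\max(a,b) = b$, so $a+c = b$; combining this with the hypothesis $a+c = b+d$ gives $d = 0$, so again $\min(c,d) = 0$.

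For the reverse direction, suppose $\min(c,d) = 0$. If $c = 0$, then $a = b + d \geq b$, so $\max(a,b) = a = a + c$. If instead $d = 0$, then $b = a + c \geq a$, so $\max(a,b) = b = a + c$. In either case the required equality holds.

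There isn't really an obstacle here; the lemma is essentially bookkeeping about how sum and max interact on $\NN$. The one thing to keep track of is that the condition $m+p = n+q$ is used only to transfer information between the $p$ and $q$ coordinates in the second subcase of the forward direction, and that all four case splits reduce immediately to a single arithmetic identity.
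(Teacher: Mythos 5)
Your proof is correct and follows essentially the same route as the paper's: both arguments work coordinate-by-coordinate, using the hypothesis $m_i+p_i=n_i+q_i$ to pass between $p_i$ and $q_i$, and split into cases according to which of $m_i,n_i$ is larger (forward direction) or which of $p_i,q_i$ vanishes (reverse direction). Proving the scalar biconditional in each coordinate indeed yields the vector statement, so the reduction is sound.
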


\begin{proof}
First suppose $m+p=m\vee n$. Then for each $i$,
\[
p_i=(m\vee n)_i-m_i=\max(m_i,n_i)-m_i
=\begin{cases}0&\text{if $n_i< m_i$}\\n_i-m_i&\text{if $n_i\geq m_i$,}\end{cases}
\]
and
\[
q_i=m_i-n_i+p_i
=\begin{cases}m_i-n_i&\text{if $n_i<m_i$}\\0&\text{if $n_i\geq m_i$.}\end{cases}
\]
So for all $i$, $\min(p_i,q_i)=0$, which means $p\wedge q=0$.

Conversely, suppose that $\min(p_i,q_i)=0$ for all $i$. If $p_i\not=0$, then $q_i=0$,
$m_i+p_i=n_i$, and $m_i+p_i=\max(m_i,n_i)$. If $p_i=0$, then $m_i=m_i+p_i=n_i+q_i$, $m_i\geq n_i$,
and $m_i+p_i=m_i=\max(m_i,n_i)$. So $m+p=m\vee n$.
\end{proof}

\begin{proof}[Proof of Proposition~\ref{idKMSbetak}\,\eqref{preak}]
Suppose that $\phi$ satisfies \eqref{charKMSbetak}, and consider two spanning elements $t_\mu t_\nu^*$ and $t_\sigma t_\tau^*$ with $s(\mu)=s(\nu)$ and $s(\sigma)=s(\tau)$. Then (T5) implies that
\begin{equation*}
\phi(t_\mu t_\nu^*t_\sigma t_\tau^*)=\sum_{(\alpha,\eta)\in\Lambda^{\min}(\nu,\sigma)} \phi(t_{\mu\alpha}t_{\tau\eta}^*).
\end{equation*}
The formula \eqref{charKMSbetak} says that the $(\alpha,\eta)$-summand vanishes unless
$\mu\alpha=\tau\eta$, and hence
\begin{equation}\label{onewayround}
\phi(t_\mu t_\nu^*t_\sigma t_\tau^*)=\sum_{\{(\alpha,\eta)\in\Lambda^{\min}(\nu,\sigma)\,:\,\mu\alpha=\tau\eta\}} e^{-\beta r\cdot d(\mu\alpha)}\phi(q_{s(\alpha)}).
\end{equation}
Similarly
\begin{equation}\label{tother}
\phi(t_\sigma t_\tau^*t_\mu t_\nu^*)=\sum_{\{(\gamma,\zeta)\in\Lambda^{\min}(\tau,\mu)\,:\,\sigma\gamma=\nu\zeta\}} e^{-\beta r\cdot d(\sigma\gamma)}\phi(q_{s(\gamma)}).
\end{equation}

We will next show that the indexing sets in the sums in \eqref{onewayround} and \eqref{tother} are
closely related. To see this, suppose that $(\alpha,\eta)\in\Lambda^{\min}(\nu,\sigma)$ satisfies
$\mu\alpha=\tau\eta$. Since $(\alpha,\eta)\in\Lambda^{\min}(\nu,\sigma)$  we have $d(\alpha)\wedge
d(\eta)=0$, because otherwise the extension is not minimal. Applying Lemma~\ref{minimal} to the
equality $d(\mu)+d(\alpha)=d(\tau)+d(\eta)$ shows that $d(\mu)+d(\alpha)=d(\mu)\vee d(\tau)$; hence $(\eta,\alpha)$ belongs to $\Lambda^{\min}(\tau,\mu)$. The situation is symmetric, so we deduce that the map $(\alpha,\eta)\mapsto
(\eta, \alpha)$ is a bijection of the index set in \eqref{onewayround} onto the index set in
\eqref{tother}. 

Fix $(\alpha, \eta)$ in the index set in \eqref{onewayround}. Since $s(\alpha)=s(\eta)$, the projections $q_{s(\alpha)}$ and $q_{s(\eta)}$ are
the same. Thus to verify the KMS condition, it suffices for us to see that the coefficient
$e^{-\beta r\cdot d(\mu\alpha)}$ of $\phi(q_{s(\alpha)})$ is the same as the coefficient of
$\phi(q_{s(\eta)})$ in the sum for
\[
\phi\big(t_\sigma t_\tau^*\alpha_{i\beta}(t_\mu t_\nu^*)\big)=e^{-\beta r\cdot (d(\mu)-d(\nu))}\phi(t_\sigma t_\tau^*t_\mu t_\nu^*),
\]
which is
\[
e^{-\beta r\cdot (d(\mu)-d(\nu))}e^{-\beta r\cdot d(\sigma\eta)}=e^{-\beta r\cdot (d(\mu)-d(\nu))}e^{-\beta r\cdot d(\nu\alpha)}.
\]
Now the identity
\[
d(\mu)-d(\nu)+d(\nu\alpha)=d(\mu)-d(\nu)+d(\nu)+d(\alpha)=d(\mu\alpha)
\]
gives the required equality of coefficients.  Thus $\phi$ is a KMS${}_\beta$ state.

Next suppose that $r\in (0,\infty)^k$ has rationally independent coordinates.
Let  $\phi$ be a KMS$_\beta$ state.  Take $\mu,\nu\in \Lambda$ with $s(\mu)=s(\nu)$. If $d(\mu)=d(\nu)$, then the KMS condition gives
\[
\phi(t_\mu t_\nu^*)=e^{-\beta r\cdot d(\mu)}\phi(t_\nu^*t_\mu)=\delta_{\mu,\nu}e^{-\beta r\cdot d(\mu)}\phi(q_{s(\mu)}).
\]
So we suppose that $d(\mu)\not= d(\nu)$.  Then two applications of the KMS condition give
\begin{equation}\label{twotimeKMS}
\phi(t_\mu t_\nu^*)=e^{-\beta r\cdot d(\mu)}\phi(t_\nu^*t_\mu)=e^{-\beta r\cdot d(\mu)}e^{\beta r\cdot d(\nu)}\phi(t_\mu t_\nu^*).
\end{equation} 
Since the entries of $r$ are rationally independent and $d(\mu)\not= d(\nu)$, we have $r\cdot d(\mu)\not=r\cdot d(\nu)$; hence $e^{\beta r\cdot d(\mu)}\not= e^{\beta r\cdot d(\nu)}$, and \eqref{twotimeKMS} implies that $\phi(t_\mu t_\nu^*)=0$. Thus $\phi$ satisfies \eqref{charKMSbetak}.
\end{proof}

\begin{proof}[Proof of Proposition~\ref{idKMSbetak}\,\eqref{groundk}]
For every state $\phi$, all paths $\sigma$, $\tau$, $\mu$ and $\nu$, and all $z=x+iy\in \CC$, we have
\begin{align}\label{groundbded}
|\phi(t_\sigma t_\tau^* \alpha_{z}(t_\mu t_\nu^*))|&= |e^{i(x+iy)r\cdot (d(\mu)-d(\nu))}\phi(t_\sigma t_\tau^* t_\mu t_\nu^*)|\\
&= e^{-yr\cdot (d(\mu)-d(\nu))}|\phi(t_\sigma t_\tau^* t_\mu t_\nu^*)|.\notag
\end{align}
Suppose that $\phi$ is a ground state. Then $|\phi(t_\mu \alpha_{x+iy}(t_\nu^*))|=e^{yr\cdot d(\nu)}|\phi(t_\mu t_\nu^*)|$ is bounded on the upper half-plane $y>0$, and hence $\phi(t_\mu t^*_\nu) = 0$ whenever $r\cdot d(\nu) > 0$. Since $\phi(t_\nu t^*_\mu) = \overline{\phi(t_\mu t^*_\nu)}$, symmetry implies that $\phi(t_\mu t^*_\nu) = 0$ whenever $r\cdot d(\mu) > 0$. Hence $\phi$ satisfies~\eqref{chargroundk}. 

Now suppose that
$\phi$ satisfies~\eqref{chargroundk}. In view of \eqref{groundbded}, the Toeplitz-Cuntz-Krieger relation (T5) implies that 
\begin{equation}\label{useT5}
|\phi(t_\sigma t_\tau^* \alpha_{x+iy}(t_\mu t_\nu^*))|= e^{-yr\cdot (d(\mu)-d(\nu))}\Big|\sum_{(\alpha,\eta)\in \Lambda^{\min}(\tau,\mu)}\phi(t_{\sigma \alpha} t_{\nu \eta}^*)\Big|.
\end{equation}
By \eqref{chargroundk}, 
\begin{align*}
\phi(t_{\sigma\alpha}t^*_{\nu\eta})\neq 0\quad&\Longrightarrow\quad r\cdot d(\sigma\alpha)=0=r\cdot d(\nu\eta)\\
&\Longrightarrow\quad d(\sigma\alpha)=0=d(\nu\eta)\quad\text{(since  $r_i>0$)}\\
& \Longrightarrow\quad d(\sigma)+d(\alpha)=0=d(\nu)+d(\eta).
\end{align*}
So it is only possible to have a nonzero summand in \eqref{useT5} if $d(\sigma)=d(\nu)=d(\alpha)=d(\eta)=0$,  and then $\tau=\mu$ because    $(\alpha,\eta)\in \Lambda^{\min}(\tau,\mu)$. In that case, there is only one summand, and 
\[
|\phi(t_\sigma t_\tau^* \alpha_{x+iy}(t_\mu t_\nu^*))|= e^{-yr\cdot d(\mu)}|\phi(t_{\mu}^*t_{\mu})|=e^{-yr\cdot d(\mu)}\phi(q_{s(\mu)})
\]
is bounded on the upper half-plane $y>0$. Thus $\phi$ is a ground state.
\end{proof}

\begin{ex}
The condition $r_i>0$ was crucial in the proof of part~\eqref{groundk}, and if some $r_i<0$ there may not be many ground states. For example, suppose that $\Lambda$ is a $2$-graph with one vertex $v$, and that $|\Lambda^{e_1}|=|\Lambda^{e_2}|=1$ (in general, we call edges in $\Lambda^{e_1}$ blue and those in $\Lambda^{e_2}$ red).
Take $r=(-1,1)$. We claim that $(\T C^*(\Lambda),\alpha)$ has no ground states. To see this, let $\phi$ be a state of $\T C^*(\Lambda)$ and consider the blue edge $e$. Then
\[
\big|\phi(t_e^*\alpha_{x+iy}(t_e))\big|=e^{-yr\cdot d(e)}\phi(t_e^*t_e)=e^{-yr\cdot e_1}\phi(q_{s(e)})=e^{y}\phi(q_v)=e^y,
\]
because $q_v$ is the identity of $\T C^*(\Lambda)$. Thus $z\mapsto \phi(t_e^*\alpha_{z}(t_e))$ is not bounded on the upper half-plane, and $\phi$ is not a ground state.

It is, however, possible to find states of $\T C^*(\Lambda)$ satisfying \eqref{chargroundk}. Indeed, consider  the path representation $\pi_{T,Q}$ of $\T C^*(\Lambda)$ on $\ell^2(\Lambda)$, the basis element $h_e$ associated to $e$, and the vector state $\phi:a\mapsto (\pi_{T,Q}(a)h_e\,|\,h_e)$. If $\nu\in \Lambda$ has $r\cdot d(\nu)>0$, then $d(\nu)_2>0$; thus $\nu$ has a factorisation $f\nu'$ with $f$ red, and $\pi_{T,Q}(t_\mu t_\nu^*)h_e=\pi_{T,Q}(t_\mu t_{\nu'}^*)T_f^*h_e=0$, so $\phi(t_\mu t_\nu^*)=0$. A similar argument works if $r\cdot d(\mu)>0$, so $\phi$ satisfies \eqref{chargroundk}.
\end{ex}

\section{KMS states and the subinvariance relation}

The gist of Proposition~\ref{idKMSbetak}, and especially the formula \eqref{charKMSbetak}, is that a KMS state $\phi$ on $\T C^*(\Lambda)$ is determined by the numbers $\{\phi(q_v):v\in \Lambda^0\}$. We now look at how these numbers are related to the vertex matrices $A_i$ of $\Lambda$.

\begin{prop}\label{propsofm}
Suppose that $\Lambda$ is a finite $k$-graph with no sources. Let $r \in (0,\infty)^k$, let $\gamma : \TT^k \to \Aut(\T C^*(\Lambda))$ be the gauge action, and
define $\alpha : \RR \to \Aut(\T C^*(\Lambda))$ by $\alpha_t = \gamma_{e^{itr}}$. Suppose that $\beta\in
[0,\infty)$ and $\phi$ is a KMS$_\beta$ state of $(\T C^*(\Lambda),\alpha)$.
\begin{enumerate}
\item\label{ak} Define $m^\phi=(m_v)\in [0,1]^{\Lambda^0}$ by $m_v=\phi(q_v)$. Then $m^\phi$ is a probability measure on $\Lambda^0$, and for every subset $K$ of $\{1,\ldots,k\}$ we have $\prod_{i\in K}(1-e^{-\beta r_i}A_i)m^\phi\geq 0$ pointwise.
\item\label{whenCKk} The KMS${}_\beta$ state $\phi$ factors through $C^*(\Lambda)$ if and only if $A_i m^\phi = e^{\beta r_i} m^\phi$ for every $1\le i \le k$.
\end{enumerate}
\end{prop}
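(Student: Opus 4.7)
The plan is to prove (a) by exhibiting, for each $K \subseteq \{1, \ldots, k\}$ and $v \in \Lambda^0$, a positive element of $\T C^*(\Lambda)$ whose image under $\phi$ equals $\big(\prod_{i \in K}(1 - e^{-\beta r_i}A_i) m^\phi\big)_v$, and to prove (b) by first checking that $\phi$ annihilates the obvious generators of the Cuntz--Krieger ideal if and only if the eigenvalue equations hold, then upgrading annihilation on these generators to annihilation on the full two-sided ideal by combining Cauchy--Schwarz with the KMS condition.

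For (a), the probability-measure claim is immediate: each $q_v$ is a projection, so $m_v \in [0,1]$, and $\sum_{v \in \Lambda^0} q_v = 1$ in $\T C^*(\Lambda)$ (as $\Lambda^0$ is finite), so $\sum_v m_v = \phi(1) = 1$. For the subinvariance inequalities, introduce the projections
\[
P_n^v := \sum_{\lambda \in v\Lambda^n} t_\lambda t_\lambda^* \qquad (v \in \Lambda^0,\ n \in \NN^k).
\]
The crucial structural identity is $P_n^v P_m^v = P_{n \vee m}^v$: for $\lambda \in v\Lambda^n$ and $\mu \in v\Lambda^m$, relation (T5) gives $t_\lambda t_\lambda^* t_\mu t_\mu^* = \sum_{(\eta, \zeta) \in \Lambda^{\min}(\lambda, \mu)} t_{\lambda\eta} t_{\lambda\eta}^*$ (since $\lambda\eta = \mu\zeta$ has degree $n \vee m$), and the unique factorisation property ensures that each $\tau \in v\Lambda^{n \vee m}$ arises exactly once as $\lambda, \mu$ range. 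In particular, the projections $\{q_v - P_{e_i}^v\}_{i=1}^k$ pairwise commute, so for each $K \subseteq \{1, \ldots, k\}$ the product $\prod_{i \in K}(q_v - P_{e_i}^v)$ is itself a projection, and inclusion--exclusion gives
\[
\prod_{i \in K}(q_v - P_{e_i}^v) \;=\; \sum_{J \subseteq K}(-1)^{|J|} P_{n_J}^v, \qquad n_J := \sum_{i \in J} e_i.
\]
Applying $\phi$ and using $\phi(P_n^v) = e^{-\beta r \cdot n}(A^n m^\phi)_v$ (which follows from \eqref{propKMSbetak}) yields the required positivity of $\big(\prod_{i \in K}(1 - e^{-\beta r_i} A_i) m^\phi\big)_v$.

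For (b), recall that $C^*(\Lambda)$ is the quotient of $\T C^*(\Lambda)$ by the closed two-sided ideal $I$ generated by the positive elements $Q_v^i := q_v - P_{e_i}^v$. If $\phi$ factors through $C^*(\Lambda)$, then $\phi(Q_v^i) = 0$ for every $v, i$, which unpacks to $m_v = e^{-\beta r_i}(A_i m^\phi)_v$, i.e.\ $A_i m^\phi = e^{\beta r_i} m^\phi$. Conversely, suppose this eigenvalue equation holds; then $\phi(Q_v^i) = 0$, and since $Q_v^i \geq 0$, Cauchy--Schwarz in the GNS inner product gives $\phi(a Q_v^i) = \phi(Q_v^i a) = 0$ for every $a \in \T C^*(\Lambda)$. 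To upgrade to vanishing on all of $I$, observe that $Q_v^i$ is fixed by the gauge action $\gamma$ (since $\gamma_z(t_\lambda t_\lambda^*) = t_\lambda t_\lambda^*$), hence $\alpha$-analytic with $\alpha_{i\beta}(Q_v^i) = Q_v^i$. For analytic $a, b$, the KMS condition applied to $aQ_v^i$ and $b$ gives
\[
\phi(a Q_v^i b) \;=\; \phi\big(b\, \alpha_{i\beta}(a Q_v^i)\big) \;=\; \phi\big(b\, \alpha_{i\beta}(a)\, Q_v^i\big) \;=\; 0,
\]
and density of analytic elements together with continuity of $\phi$ extend this to all $a, b$, so $\phi$ vanishes on $I$. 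The main technical hurdle is the product formula $P_n^v P_m^v = P_{n \vee m}^v$, the Nica-covariance input that drives the inclusion--exclusion; once this is in hand, the rest of (a) is a one-line evaluation, and the KMS-plus-Cauchy--Schwarz upgrade in (b) is routine.
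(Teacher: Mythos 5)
Your proof is correct and follows essentially the same route as the paper: part (a) rests on the same Nica-covariance product formula for the range projections (your $P^v_nP^v_m=P^v_{n\vee m}$ is the paper's Lemma~\ref{prod=sum}) followed by the same inclusion--exclusion and evaluation under $\phi$, and part (b) checks vanishing on the generators of the Cuntz--Krieger ideal exactly as in the paper. The only difference is that where the paper cites Lemma~2.2 of \cite{aHLRSk=1} to pass from vanishing on the $\gamma$-fixed generators to vanishing on the whole ideal, you prove that step inline via Cauchy--Schwarz and the KMS condition, which is precisely the content of that lemma.
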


For the proof of part \eqref{ak} we need a lemma.

\begin{lem}\label{prod=sum}
Let $K$ be a subset of $\{1,\ldots,k\}$ and $v\in \Lambda^0$. For $J\subset K$, we write
$e_J:=\sum_{j\in J}e_j$ and
\[
q_J=\sum_{\mu\in v\Lambda^{e_J}}t_\mu t_\mu^* \in \T C^*(\Lambda);
\]
we write $q_{\emptyset}:=q_v$ and $q_i=q_{\{i\}}$. Then
\begin{equation}\label{multprojs}
\prod_{i\in K}(q_v-q_i)=\sum_{J\subset K}(-1)^{|J|}q_J.
\end{equation}
\end{lem}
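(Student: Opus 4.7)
The plan is to reduce the identity to two multiplicative facts about the projections involved, and then expand algebraically. First I would prove that for every non-empty $J \subset K$,
\[
\prod_{i \in J} q_i = q_J,
\]
by induction on $|J|$, with base case $q_i q_j = q_{\{i,j\}}$ for $i \neq j$. Expanding using (T5),
\[
q_i q_j = \sum_{\mu \in v\Lambda^{e_i},\, \nu \in v\Lambda^{e_j}} t_\mu t_\mu^* t_\nu t_\nu^*
= \sum_{\mu,\nu}\ \sum_{(\alpha,\beta) \in \Lambda^{\min}(\mu,\nu)} t_{\mu\alpha}\, t_{\nu\beta}^*,
\]
and every summand has $\mu\alpha = \nu\beta =: \lambda \in v\Lambda^{e_i+e_j}$, hence contributes $t_\lambda t_\lambda^*$. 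Conversely, the factorisation property shows that each $\lambda \in v\Lambda^{e_i+e_j}$ arises from exactly one triple $(\mu,\nu,(\alpha,\beta))$, namely the data of its unique factorisations in shapes $(e_i,e_j)$ and $(e_j,e_i)$. So the triple sum collapses to $\sum_{\lambda \in v\Lambda^{e_i+e_j}} t_\lambda t_\lambda^* = q_{\{i,j\}}$. The inductive step replaces $e_i$ by $e_{J \setminus \{j\}}$ in the first factor and repeats the argument verbatim.

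Second, (T4) applied at $n = e_J$ gives $q_v \geq q_J$, so $q_v q_J = q_J = q_J q_v$; in particular $q_v$ and the $\{q_i : i \in K\}$ pairwise commute. I then expand the product binomially:
\[
\prod_{i \in K}(q_v - q_i) = \sum_{J \subset K} (-1)^{|J|}\, q_v^{|K \setminus J|} \prod_{i \in J} q_i.
\]
For $J = K$, $q_v^0 = 1$ and $\prod_{i \in K} q_i = q_K$. For $J \subsetneq K$, $q_v^{|K \setminus J|} = q_v$ (since $q_v$ is a projection) and $q_v \prod_{i \in J} q_i = q_v q_J = q_J$. In every case the summand equals $(-1)^{|J|} q_J$, where the stated convention $q_\emptyset := q_v$ handles $J = \emptyset$ uniformly, and this is exactly \eqref{multprojs}.

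The only substantive step is the multiplication formula $q_i q_j = q_{\{i,j\}}$: the work is in the bijection between triples $(\mu,\nu,(\alpha,\beta))$ with $(\alpha,\beta) \in \Lambda^{\min}(\mu,\nu)$ and paths of degree $e_i+e_j$ starting at $v$, which is standard (T5)-plus-factorisation bookkeeping. Everything else is textbook inclusion--exclusion for commuting projections, with the one subtlety being the consistent handling of the empty-set convention $q_\emptyset = q_v$ together with the empty product.
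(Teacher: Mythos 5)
Your proposal is correct and follows essentially the same route as the paper: (T4) gives $q_vq_i=q_i$, an inclusion--exclusion expansion of the product, and (T5) together with the factorisation property gives $q_iq_L=q_{L\cup\{i\}}$, hence $\prod_{i\in J}q_i=q_J$. The only difference is cosmetic ordering (you establish the product formula before expanding, the paper after), and your explicit remark on the bijection with $v\Lambda^{e_i+e_j}$ is exactly the collapse of the triple sum carried out in the paper.
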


\begin{proof}
The Toeplitz-Cuntz-Krieger relation (T4) implies that $q_vq_i=q_i$ for all $i$. So
\begin{equation}\label{multout}
\prod_{i\in K}(q_v-q_i)=\sum_{J\subset K}(-1)^{|J|}\Big(\prod_{i\in J}q_i\Big).
\end{equation}
Suppose $\emptyset\subsetneq L\subset J\setminus\{i\}$.  Then $e_i\vee e_L=e_{L\cup\{i\}}$, and (T5) gives
\begin{align*}
q_iq_L&=\sum_{\mu\in v\Lambda^{e_i}}\sum_{\nu\in v\Lambda^{e_L}}t_\mu(t_\mu^*t_\nu)t_\nu^*
=\sum_{\mu\in v\Lambda^{e_i}}\sum_{\nu\in v\Lambda^{e_L}}t_\mu \Big( \sum_{(\alpha,\beta) \in\Lambda^{\min}(\mu,\nu)}t_\alpha t_\beta^* \Big)  t_\nu^*\\
&=\sum_{\mu\in v\Lambda^{e_i}}\sum_{\nu\in v\Lambda^{e_L}}\sum_{(\alpha,\beta) \in\Lambda^{\min}(\mu,\nu)} t_{\mu\alpha} t_{\nu\beta}^*
=\sum_{\gamma\in v\Lambda^{e_{L\cup\{i\}}}}t_\gamma t_\gamma^*=q_{L\cup\{i\}}.
\end{align*}
We deduce that $\prod_{i\in J}q_i=q_J$, and now \eqref{multout} implies
\eqref{multprojs}.
\end{proof}

\begin{proof}[Proof of Proposition~\ref{propsofm}] \eqref{ak} We have $m_v\geq 0$ because $\phi$ is positive. The measure $m^\phi$ is a probability measure on
$\Lambda^0$ because $\sum_{v\in \Lambda^0} q_v=1$ in $\T C^*(\Lambda)$, and hence
\[
1=\phi(1)=\sum_{v\in \Lambda^0}\phi(q_v)=\sum_{v\in \Lambda^0}m_v.
\]
Now let $K$ be a subset of $\{1,\ldots,k\}$ and choose $v\in \Lambda^0$. Since (T4) gives $q_v\geq
q_i$ and all the range projections commute, we have $\prod_{i\in K}(q_v-q_i)\geq 0$ in $\T
C^*(\Lambda)$, and $\phi\big(\prod_{i\in K}(q_v-q_i)\big)\geq 0$. We calculate using Lemma~\ref{prod=sum} and then \eqref{propKMSbetak}:
\begin{align*}
0&\leq \phi\Big(\sum_{J\subset K}(-1)^{|J|}q_J\Big)=\sum_{J\subset K}(-1)^{|J|}\phi(q_J)
 =\sum_{J\subset K}(-1)^{|J|}\Big(\sum_{\mu\in v\Lambda^{e_J}}\phi(t_\mu t_\mu^*)\Big)\\
&=\sum_{J\subset K}(-1)^{|J|}\Big(\sum_{\mu\in v\Lambda^{e_J}}e^{-\beta r\cdot e_J}m_{s(\mu)}\Big)
 =\sum_{J\subset K}(-1)^{|J|}e^{-\beta r\cdot e_J}\Big(\sum_{w\in\Lambda^{0}}|v\Lambda^{e_J}w|m_{w}\Big)\\
&=\sum_{J\subset K}(-1)^{|J|}e^{-\beta r\cdot e_J}\Big(\sum_{w\in\Lambda^{0}} A^{e_J}(v,w)m_{w}\Big)
 =\sum_{J\subset K}(-1)^{|J|}e^{-\beta r\cdot e_J}\big(A^{e_J}m^\phi\big)_v\\
&=\sum_{J\subset K}(-1)^{|J|}\Big(\Big(\prod_{j\in J}e^{-\beta r_j}A_j\Big)m^\phi\Big)_v
 =\Big(\Big(\prod_{i\in K}\big(1-e^{-\beta r_i}A_i\big)\Big)m^\phi\Big)_v,
\end{align*}
as required.

\eqref{whenCKk} As pointed out in \cite[Remark~1.6(iii)]{KP}, it suffices to check the Cuntz-Krieger relation (CK) on the generators $n=e_i$ of $\NN^k$. Thus $C^*(\Lambda)$ is the quotient of $\T C^*(\Lambda)$ by the ideal $J$ generated by the projections 
\[
\Big\{q_v - \sum_{\lambda \in
v\Lambda^{e_i}} t_\lambda t^*_\lambda:\text{$v \in \Lambda^0$ and $1 \le i \le k$}\Big\}.
\]
For each generating projection of $J$, a calculation using \eqref{charKMSbetak} gives
\begin{align}\label{factoriff}
\phi\Big(q_v-\sum_{\lambda \in v\Lambda^{e_i}} t_\lambda t^*_\lambda\Big)
&=m_v-\sum_{\lambda \in v\Lambda^{e_i}}e^{-\beta r_i}\phi(q_{s(\lambda)})=m_v-\sum_{w\in \Lambda^0}e^{-\beta r_i}|v\Lambda^{e_i}w|\phi(q_w)\\
&=m_v-\sum_{w\in \Lambda^0}e^{-\beta r_i}A_i(v,w)m_w=m_v-e^{-\beta r_i}(A_im^\phi)_v.\notag
\end{align}
If $\phi$ factors through a state of $C^*(\Lambda)$, then the left-hand side of \eqref{factoriff} vanishes, and we have $m^\phi-e^{-\beta r_i}A_im^\phi=0$. Suppose on the other hand that $m^\phi=e^{-\beta r_i}A_im^\phi$. Then \eqref{factoriff} implies that $\phi$ vanishes on the generators of $J$. Now each of these generating projections is fixed by the action $\alpha$, and for each spanning element $a=t_\mu t^*_{\nu}$ of $\T C^*(\Lambda)$, the analytic function $f_a(z):=e^{izr\cdot (d(\mu) - d(\nu))}$ satisfies $\alpha_z(a)=f_a(z)a$. Thus Lemma~2.2 of \cite{aHLRSk=1} implies that $\phi$ vanishes on the ideal $J$, and hence factors through a state of $C^*(\Lambda)=\T C^*(\Lambda)/J$.
\end{proof}

The relations $A_i m^\phi = e^{\beta r_i} m^\phi$ in part~\eqref{whenCKk} have profound implications for the KMS states on $C^*(\Lambda)$, because they imply that $e^{\beta r_i}$ is the Perron-Frobenius eigenvalue $\rho(A_i)$. (As was first noticed by Enomoto, Fujii and Watatani in the context of $\{0,1\}$-matrices \cite{EnomotoFujiiEtAl:MJ84}.)  The \emph{subinvariance} relation $(1-e^{-\beta r_i}A_i)m^\phi\geq 0$ in part \eqref{ak}, which is equivalent to $A_i m^\phi \leq e^{\beta r_i} m^\phi$, has similar implications for the KMS states on the Toeplitz-Cuntz-Krieger algebra $\T C^*(\Lambda)$. (This was used by Exel and Laca in the context of infinite $\{0,1\}$-matrices \cite[\S12]{EL}, and later in the context of finite-graph algebras \cite{aHLRSk=1}.) 

The  Perron-Frobenius theorem is about irreducible matrices; we say that a $k$-graph $\Lambda$ is \emph{coordinatewise irreducible} if each vertex matrix $A_i$
is irreducible. Next we see that coordinatewise irreducibility restricts the possible values of the inverse temperature for KMS states.

\begin{cor}\label{restrictbeta}
Suppose that $\Lambda$ is a  coordinatewise-irreducible finite $k$-graph. Fix $r\in (0,\infty)^k$ and
define $\alpha:\RR\to \Aut \T C^*(\Lambda)$ by $\alpha_t=\gamma_{e^{itr}}$. If $\beta\in
[0,\infty)$ and there is a KMS$_\beta$ state for $(\T C^*(\Lambda),\alpha)$, then $\beta r_i\geq
\ln \rho(A_i)\geq 0$ for $1\leq i\leq k$.
\end{cor}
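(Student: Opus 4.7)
The plan is to extract the subinvariance inequality from Proposition~\ref{propsofm}\,\eqref{ak} and feed it into the standard Perron-Frobenius machinery for irreducible non-negative matrices.

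First, fix $i \in \{1,\ldots,k\}$ and apply Proposition~\ref{propsofm}\,\eqref{ak} with the singleton $K = \{i\}$ to the hypothetical KMS$_\beta$ state $\phi$. This yields the componentwise inequality
\[
A_i m^\phi \leq e^{\beta r_i} m^\phi,
\]
where $m^\phi$ is a probability vector on $\Lambda^0$. Since $A_i$ is irreducible and non-negative, the Perron-Frobenius theorem applied to the transpose $A_i^T$ gives a strictly positive (row) eigenvector $y_i$ with $y_i A_i = \rho(A_i) y_i$. Pairing $y_i$ with the subinvariance inequality gives
\[
\rho(A_i)\, y_i \cdot m^\phi = y_i A_i m^\phi \leq e^{\beta r_i}\, y_i \cdot m^\phi.
\]
Because $y_i$ has strictly positive entries and $m^\phi$ is a non-zero non-negative probability vector, the quantity $y_i \cdot m^\phi$ is strictly positive, so dividing gives $\rho(A_i) \leq e^{\beta r_i}$, i.e.\ $\beta r_i \geq \ln \rho(A_i)$.

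It remains to show $\ln \rho(A_i) \geq 0$, i.e.\ $\rho(A_i) \geq 1$. Since $\Lambda$ has no sources, every vertex $v$ receives at least one edge of degree $e_i$, so every row sum $\sum_w A_i(v,w)$ is at least $1$. The standard bound $\rho(A_i) \geq \min_v \sum_w A_i(v,w)$ for non-negative matrices (which follows, for instance, from applying $A_i$ to the all-ones vector and comparing with the Perron-Frobenius left eigenvector) then gives $\rho(A_i) \geq 1$.

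The only real point requiring care is the appeal to a positive left eigenvector of $A_i$: one needs the Perron-Frobenius theorem not just for right eigenvectors (as quoted in \S\ref{PFback}) but also for left ones, which is automatic since $A_i^T$ is irreducible and non-negative whenever $A_i$ is. Everything else is bookkeeping.
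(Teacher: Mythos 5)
Your proof is correct and follows essentially the same route as the paper: apply Proposition~\ref{propsofm}\,\eqref{ak} with $K=\{i\}$ to get the subinvariance $A_i m^\phi \le e^{\beta r_i} m^\phi$, deduce $e^{\beta r_i}\ge\rho(A_i)$ from Perron--Frobenius theory, and obtain $\rho(A_i)\ge 1$ from the fact that every row sum of $A_i$ is at least $1$. The only difference is that you reprove the two standard Perron--Frobenius facts directly via the positive left eigenvector, where the paper simply cites \cite[Theorem~1.6]{Seneta} and Corollary~1 on page~8 of \cite{Seneta}.
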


\begin{proof}
Define $m^\phi:\Lambda^0\to[0,\infty)$ by $m^\phi=(\phi(q_v))$. Applying Proposition~\ref{propsofm}\;\eqref{ak} to the singleton sets $K=\{i\}$ shows that $(1-e^{-\beta r_i}A_i)m^\phi\geq 0$ for $1\leq
i\leq k$. This says precisely that for each $i$, $m^\phi$ is a subinvariant vector in the sense
that $A_im^\phi\leq e^{\beta r_i}m^\phi$ pointwise, and then \cite[Theorem~1.6]{Seneta} implies
that $e^{\beta r_i}\geq \rho(A_i)$.

Since each vertex receives edges of every colour, each row in each vertex matrix $A_i$ contains at least one positive integer. Thus each row-sum of each $A_i$ is at least $1$, and Corollary~1 on \cite[page~8]{Seneta} implies that $\rho(A_i)\geq 1$ for $1\leq i\leq k$.
\end{proof}

\begin{cor}\label{restrictbeta2}
Suppose that $\Lambda$ is a  coordinatewise-irreducible finite $k$-graph. Fix $r\in (0,\infty)^k$ and
define $\alpha:\RR\to \Aut C^*(\Lambda)$ by $\alpha_t=\gamma_{e^{itr}}$. If $\beta\in
[0,\infty)$ and there is a KMS$_\beta$ state for $(C^*(\Lambda),\alpha)$, then $\beta r_i=\ln \rho(A_i)$ for $1\leq i\leq k$.
\end{cor}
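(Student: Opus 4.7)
The plan is to lift the KMS$_\beta$ state of $(C^*(\Lambda),\alpha)$ to a KMS$_\beta$ state of $(\T C^*(\Lambda),\alpha)$ by composing with the canonical quotient map $\T C^*(\Lambda)\to C^*(\Lambda)$, and then apply the invariance statement of Proposition~\ref{propsofm}\,\eqref{whenCKk}. The gauge action on $\T C^*(\Lambda)$ descends to the gauge action on $C^*(\Lambda)$, so the lift $\tilde\phi$ is a KMS$_\beta$ state that factors through $C^*(\Lambda)$; hence $A_i m^{\tilde\phi}=e^{\beta r_i}m^{\tilde\phi}$ for each $1\leq i\leq k$.

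Next I would argue that $m^{\tilde\phi}$ is a genuine nonnegative eigenvector, not the zero vector: by part~\eqref{ak} of Proposition~\ref{propsofm}, $m^{\tilde\phi}$ is a probability measure on $\Lambda^0$, so its entries sum to $1$. This puts us exactly in the situation covered by the Perron-Frobenius statements recalled in \S\ref{PFback}: the matrix $A_i$ is irreducible with nonnegative entries, and $m^{\tilde\phi}$ is a nonzero nonnegative eigenvector. The last statement cited from \cite[Theorem~1.6]{Seneta} in \S\ref{PFback} says that every nonnegative eigenvector of an irreducible nonnegative matrix has eigenvalue $\rho(A_i)$. Thus $e^{\beta r_i}=\rho(A_i)$ and taking logarithms gives $\beta r_i=\ln\rho(A_i)$ for each $i$.

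I do not expect any real obstacle here: the hard work has been done in Propositions~\ref{idKMSbetak} and~\ref{propsofm}, and the conclusion is extracted by a one-line appeal to Perron-Frobenius. The only point requiring a little care is checking that the lift is well-defined and still KMS, but this is immediate because the quotient map intertwines the two gauge actions and sends the generating Toeplitz-Cuntz-Krieger family to the generating Cuntz-Krieger family, so the characterisation \eqref{charKMSbetak} transfers verbatim from $\phi$ to its lift.
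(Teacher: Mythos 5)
Your proof is correct and follows essentially the same route as the paper: pull the state back to $\T C^*(\Lambda)$, apply Proposition~\ref{propsofm}\,\eqref{whenCKk} to get $A_im^\phi=e^{\beta r_i}m^\phi$, and invoke Perron--Frobenius for the irreducible $A_i$. The only difference is that you make explicit the lifting step and the nonvanishing of $m^\phi$, which the paper leaves implicit; both points are handled correctly.
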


\begin{proof}
Part (b) of Proposition~\ref{propsofm} implies that the vector $m^\phi:=(\phi(q_v))$ satisfies $A_im^\phi=e^{\beta r_i}m^\phi$ for $1\leq i\leq k$. Since each $A_i$ is irreducible, the Perron-Frobenius theorem implies that $e^{\beta r_i}=\rho(A_i)$ for all $i$, and taking logs gives the result.
\end{proof}

\begin{rmk}\label{pref} 
Corollaries~\ref{restrictbeta} and~\ref{restrictbeta2} imply that there is a relationship between the dynamics, which is determined by the vector $r\in (0,\infty)^k$, and the range of possible inverse temperatures $\beta$. For any $r$, we have $\beta r_i\geq \ln\rho(A_i)$ for large values of $\beta$, and we will identify the simplex of KMS$_\beta$ states on $(\T C^*(\Lambda),\alpha)$ in Theorem~\ref{mainthmk}. When $e^r$ is a multiple of of the vector $\rho(\Lambda):=(\rho(A_1),\cdots,\rho(A_k))$ of Perron-Frobenius eigenvalues, we call the common value $\beta_c:=r_i^{-1}\ln\rho(A_i)$ the \emph{critical inverse temperature}. Corollary~\ref{restrictbeta2} shows that if $e^r$ is a multiple of $\rho(\Lambda)$, then $\beta_c$ is the only possible inverse temperature for a KMS state on $(C^*(\Lambda),\alpha)$. In this last case, we usually normalise the action by taking $r_i=\ln\rho(A_i)$ for all $i$, and then we refer to $\alpha_t=\gamma_{\rho(\Lambda)^{it}}$ as ``the preferred dynamics''. When $\Lambda$ has a single vertex, this preferred dynamics was studied by Yang \cite{Yang:xx09, Yang}. 

\end{rmk}

\section{A characterisation of KMS states for large inverse temperatures}\label{sec:largebeta}

For large $\beta$ we have $\beta r_i>\ln\rho(A_i)$ for all $i$, and we can significantly strengthen the characterisation of KMS$_\beta$ states on the Toeplitz algebra in Proposition~\ref{idKMSbetak}\,\eqref{preak} by shedding the assumption of rational independence. We use this extra strength in our computation of KMS states in \S\ref{sec:toeplitzKMS}, and it is also of some intrinsic interest (see Remark~\ref{extrasym} below).

\begin{thm}\label{improvedidKMS}
Suppose that $\Lambda$ is a finite $k$-graph with no sources. Let $r \in (0,\infty)^k$, let $\gamma : \TT^k \to \Aut(\T C^*(\Lambda))$ be the gauge action, and
define $\alpha : \RR \to \Aut(\T C^*(\Lambda))$ by $\alpha_t = \gamma_{e^{itr}}$. Suppose that $\beta\in
(0,\infty)$ satisfies $\beta r_i>\ln\rho(A_i)$ for $1\leq i\leq k$. Then a state $\phi$ on $\T C^*(\Lambda)$ is a KMS$_\beta$ state for $\alpha$ if and only if
    \begin{equation}\label{charKMSbetak2}
    \phi(t_\mu t_\nu^*)=\delta_{\mu,\nu}e^{-\beta r\cdot d(\mu)}\phi(q_{s(\mu)})\quad\text{for all $\mu,\nu\in \Lambda$.}
    \end{equation}
\end{thm}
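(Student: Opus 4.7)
The $(\Leftarrow)$ direction is immediate from Proposition~\ref{idKMSbetak}\,\eqref{preak}. For $(\Rightarrow)$, my plan is as follows: Proposition~\ref{idKMSbetak}\,\eqref{prepreak} already supplies \eqref{charKMSbetak2} whenever $d(\mu)=d(\nu)$, and two successive applications of the KMS condition give $\phi(t_\mu t_\nu^*)=e^{\beta r\cdot(d(\nu)-d(\mu))}\phi(t_\mu t_\nu^*)$, which disposes of the case $r\cdot d(\mu)\neq r\cdot d(\nu)$. The remaining ``resonant'' case -- $d(\mu)\neq d(\nu)$ but $r\cdot d(\mu)=r\cdot d(\nu)$ -- is exactly where the hypothesis $\beta r_i>\ln\rho(A_i)$ must be used. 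My strategy there is to iterate the combination of KMS with (T5) and then use Gelfand's spectral-radius formula to force $\phi(t_\mu t_\nu^*)=0$.

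Combining KMS and (T5) yields
\begin{equation*}
\phi(t_\mu t_\nu^*) = e^{-\beta r\cdot d(\mu)}\sum_{(\xi,\eta)\in\Lambda^{\min}(\nu,\mu)}\phi(t_\xi t_\eta^*),
\end{equation*}
and Lemma~\ref{minimal} tells me every summand has $d(\xi)\wedge d(\eta)=0$, so I may assume $d(\mu)\wedge d(\nu)=0$ throughout. Under this assumption iterating preserves degrees: each iterated pair $(\xi_i,\eta_i)$ again satisfies $d(\xi_i)=d(\mu)$, $d(\eta_i)=d(\nu)$, and $d(\xi_i)\wedge d(\eta_i)=0$. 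After $n$ iterations one has
\begin{equation*}
\phi(t_\mu t_\nu^*) = e^{-n\beta r\cdot d(\mu)}\sum_{(\xi_1,\eta_1),\ldots,(\xi_n,\eta_n)}\phi(t_{\xi_n}t_{\eta_n}^*),
\end{equation*}
with the outer sum running over chains $(\xi_1,\eta_1)\in\Lambda^{\min}(\nu,\mu)$ and $(\xi_{i+1},\eta_{i+1})\in\Lambda^{\min}(\eta_i,\xi_i)$.

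To bound the number of chains of length $n$, observe that at each step $\xi_{i+1}$ lies in $s(\xi_i)\Lambda^{d(\mu)}$, so the number of chains with $s(\xi_n)=w_n$ is at most $A^{nd(\mu)}(s(\mu),w_n)$; summing over $w_n$ bounds the total by $(A^{nd(\mu)}\mathbf{1})_{s(\mu)}$. Combined with the trivial estimate $|\phi(t_{\xi_n}t_{\eta_n}^*)|\le 1$, this yields
\begin{equation*}
|\phi(t_\mu t_\nu^*)| \le e^{-n\beta r\cdot d(\mu)}(A^{nd(\mu)}\mathbf{1})_{s(\mu)}\quad\text{for every }n\in\NN.
\end{equation*}
Gelfand's formula gives $\limsup_n\bigl((A^{nd(\mu)}\mathbf{1})_{s(\mu)}\bigr)^{1/n}\le\rho(A^{d(\mu)})$, and the inequality $\rho(BC)\le\rho(B)\rho(C)$ for commuting matrices gives $\rho(A^{d(\mu)})\le\prod_i\rho(A_i)^{d(\mu)_i}$. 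Since $d(\mu)\neq 0$ and $\beta r_i>\ln\rho(A_i)$, the quantity $e^{-\beta r\cdot d(\mu)}\prod_i\rho(A_i)^{d(\mu)_i}=\exp\bigl(-\sum_i d(\mu)_i(\beta r_i-\ln\rho(A_i))\bigr)$ is strictly less than $1$, so the right-hand side of the key estimate decays geometrically in $n$ and $\phi(t_\mu t_\nu^*)=0$.

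The hard part will be the chain-counting step. One must be careful in tracking how the successive minimal common extensions $\Lambda^{\min}(\eta_i,\xi_i)$ interact with the $k$-graph factorisation property, and confirm that the crude matrix-power bound $A^{nd(\mu)}$ really captures the growth of the chains without missing a multiplicative factor. Once that combinatorial bookkeeping is settled, the strict inequality $\beta r_i>\ln\rho(A_i)$ provides exactly the gap in the spectral estimate needed to close the argument -- reflecting why the ``large $\beta$'' hypothesis is essential and why the corresponding statement failed to be available under merely rational-independence hypotheses in Proposition~\ref{idKMSbetak}\,\eqref{preak}.
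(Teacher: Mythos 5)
Your proof is correct, but it takes a genuinely different route from the paper's. The paper isolates two lemmas: a Cauchy--Schwarz estimate (Lemma~\ref{CSestimate}) showing $|\phi(t_\mu t_\nu^*)|\le\phi(t_\mu t_\mu^*)$ in the resonant case, and an exact identity (Lemma~\ref{lemmagic}) $\phi(t_\mu t_\nu^*)=\sum_{\lambda\in s(\mu)\Lambda^{jn}}\phi(t_{\mu\lambda}t_{\nu\lambda}^*)$ with $n=(d(\mu)\vee d(\nu))-d(\mu)$, obtained by induction from a sandwich $t_{\nu\lambda}(t_{\nu\lambda}^*t_{\mu\lambda})t_{\nu\lambda}^*$; it then bounds each term by the exactly computable diagonal value $e^{-\beta r\cdot d(\mu\lambda)}\phi(q_{s(\lambda)})$ and recognises the result as a tail of the convergent resolvent series of Lemma~\ref{resolventk}. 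You instead iterate the flip $\phi(t_\mu t_\nu^*)=e^{-\beta r\cdot d(\mu)}\sum_{(\xi,\eta)\in\Lambda^{\min}(\nu,\mu)}\phi(t_\xi t_\eta^*)$, discard prefixes, extract $e^{-n\beta r\cdot d(\mu)}$ from the repeated KMS applications, and use the crude bound $|\phi(t_{\xi_n}t_{\eta_n}^*)|\le 1$ together with a chain count and Gelfand's formula. The combinatorial step you flag does check out: given $(\xi_i,\eta_i)$, the map $(\xi_{i+1},\eta_{i+1})\mapsto\xi_{i+1}$ is injective on $\Lambda^{\min}(\eta_i,\xi_i)$ by the factorisation property, and $\xi_{i+1}$ ranges over a subset of $s(\xi_i)\Lambda^{d(\mu)}$, so the number of chains ending at $w$ is indeed at most $A^{nd(\mu)}(s(\mu),w)$ with no missing factor. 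Two small points you should make explicit: (i) after reducing to $d(\mu)\wedge d(\nu)=0$ you still need $d(\mu)\neq 0$, which holds because resonance with $r\in(0,\infty)^k$ together with $d(\mu)\wedge d(\nu)=0$ and $d(\mu)\neq d(\nu)$ forces both degrees to be nonzero; (ii) the resonance condition is preserved by the flip, since $r\cdot d(\xi)-r\cdot d(\eta)=r\cdot d(\mu)-r\cdot d(\nu)=0$, which is what licenses the iteration. On balance, your argument trades the paper's exact bijection in Lemma~\ref{lemmagic} for a mere upper bound (a simplification), at the cost of importing Gelfand's formula and $\rho(BC)\le\rho(B)\rho(C)$ for commuting matrices; both arguments exploit the spectral gap $\beta r_i>\ln\rho(A_i)$ in essentially the same place.
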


The assumption of rational independence on the $r_i$ was used in the last paragraph of the proof of Proposition~\ref{idKMSbetak}\,\eqref{preak} to see that $d(\mu)\not= d(\nu)$ implies $r\cdot d(\mu)\not= r\cdot d(\nu)$, and the argument carries over without that assumption when $r\cdot d(\mu)\not= r\cdot d(\nu)$. However, it is now possible that $d(\mu)\not= d(\nu)$ and $r\cdot d(\mu)=r\cdot d(\nu)$,  and then \eqref{twotimeKMS} does not imply that $\phi(t_\mu t_\nu^*)=0$. The next two lemmas will help us deal with this situation.

\begin{lem}\label{CSestimate}
Suppose that $\phi$ is a KMS$_\beta$ state of $(\T C^*(\Lambda),\alpha)$ for some $\beta>0$, and that $\mu,\nu\in\Lambda$ satisfy $s(\mu)=s(\nu)$ and $r\cdot d(\mu)=r\cdot d(\nu)$. Then $\phi(t_\mu t_\mu^*)=\phi(t_\nu t_\nu^*)$ and $|\phi(t_\mu t_\nu^*)|\leq \phi(t_\mu t_\mu^*)$.
\end{lem}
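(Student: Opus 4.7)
The plan is to prove the equality via direct application of the KMS condition to the range projections $t_\mu t_\mu^*$ and $t_\nu t_\nu^*$, and then obtain the inequality as a standard Cauchy--Schwarz estimate for the positive linear functional $\phi$. Neither step requires the extra hypothesis $\beta r_i > \ln \rho(A_i)$; the lemma holds purely on the basis of the KMS condition at temperature $\beta$ combined with the relations $s(\mu)=s(\nu)$ and $r\cdot d(\mu)=r\cdot d(\nu)$.

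First I would handle the equality. Since $t_\mu$ and $t_\mu^*$ are analytic, the KMS condition gives
\[
\phi(t_\mu t_\mu^*) = \phi\bigl(t_\mu^* \alpha_{i\beta}(t_\mu)\bigr) = e^{-\beta r\cdot d(\mu)} \phi(t_\mu^* t_\mu) = e^{-\beta r\cdot d(\mu)} \phi(q_{s(\mu)}),
\]
and likewise $\phi(t_\nu t_\nu^*) = e^{-\beta r\cdot d(\nu)} \phi(q_{s(\nu)})$. The hypotheses $s(\mu)=s(\nu)$ and $r\cdot d(\mu)=r\cdot d(\nu)$ then force these two quantities to coincide.

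For the inequality, I would apply the Cauchy--Schwarz inequality $|\phi(x^*y)|^2 \leq \phi(x^*x)\,\phi(y^*y)$, valid for any state $\phi$, with $x = t_\mu^*$ and $y = t_\nu^*$. This yields
\[
|\phi(t_\mu t_\nu^*)|^2 \leq \phi(t_\mu t_\mu^*)\,\phi(t_\nu t_\nu^*) = \phi(t_\mu t_\mu^*)^2,
\]
and taking square roots gives the desired bound. There is no real obstacle here: the argument is just a combination of the KMS identity on the diagonal with Cauchy--Schwarz, and the only subtle point is noticing that the equality of diagonal values is exactly what turns Cauchy--Schwarz into the clean estimate $|\phi(t_\mu t_\nu^*)| \leq \phi(t_\mu t_\mu^*)$ that will be needed in the sequel.
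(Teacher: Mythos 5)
Your proof is correct and takes essentially the same approach as the paper: establish the diagonal equality from the KMS condition and then apply Cauchy--Schwarz with $x=t_\mu^*$, $y=t_\nu^*$. The only cosmetic difference is that the paper obtains $\phi(t_\mu t_\mu^*)=\phi(t_\nu t_\nu^*)$ by inserting $q_{s(\mu)}=t_\nu^*t_\nu$ and applying the KMS condition once, whereas you reduce each side separately to $e^{-\beta r\cdot d(\mu)}\phi(q_{s(\mu)})$ via the relation \eqref{propKMSbetak}; both are immediate.
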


\begin{proof}
We calculate, using the KMS condition and then the hypothesis $r\cdot d(\mu)=r\cdot d(\nu)$:
\begin{align*}
\phi(t_\mu t_\mu^*)&=\phi(t_\mu q_{s(\mu)} t_\mu^*)=\phi(t_\mu t_\nu^* t_\nu t_\mu^*)\\
&=e^{-\beta r\cdot(d(\mu)-d(\nu))}\phi(t_\nu t_\mu^* t_\mu t_\nu^*)=\phi(t_{\nu}t_{\nu}^*).
\end{align*}
Now the Cauchy-Schwarz inequality gives
\[
|\phi(t_\mu t_\nu^*)|^2\leq \phi(t_\mu t_\mu^*)\phi(t_\nu t_\nu^*)=\phi(t_\mu t_\mu^*)^2.\qedhere
\]
\end{proof}

\begin{lem}\label{lemmagic}
Suppose that $\phi$ is a KMS$_\beta$ state of $(\T C^*(\Lambda),\alpha)$ for some $\beta>0$, and that $\mu,\nu\in \Lambda$ satisfy $s(\mu)=s(\nu)$. 
\begin{enumerate}
\item\label{mostly0}
If $\lambda\in \Lambda$ satisfies $\Lambda^{\min}(\mu\lambda,\nu\lambda)=\emptyset$ then $\phi(t_{\mu\lambda}t_{\nu\lambda}^*)=0$.
\item\label{Aidansmagicform} Let  $n:=(d(\mu)\vee d(\nu))-d(\mu)$. For $j\in \NN$ we have
\begin{equation}\label{eqAidansform}
\phi(t_\mu t_\nu^*)=\sum_{\lambda\in s(\mu)\Lambda^{jn}}\phi(t_{\mu\lambda}t_{\nu\lambda}^*).
\end{equation}
\end{enumerate}
\end{lem}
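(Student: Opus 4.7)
Both parts rely on the KMS condition together with relation (T5). For (a), the KMS condition gives $\phi(t_{\mu\lambda}t_{\nu\lambda}^*) = e^{-\beta r\cdot d(\mu\lambda)}\phi(t_{\nu\lambda}^* t_{\mu\lambda})$, and (T5) expresses $t_{\nu\lambda}^* t_{\mu\lambda}$ as a sum indexed by $\Lambda^{\min}(\nu\lambda,\mu\lambda)$. Swapping coordinates is a bijection between $\Lambda^{\min}(\mu\lambda,\nu\lambda)$ and $\Lambda^{\min}(\nu\lambda,\mu\lambda)$, so the hypothesis of (a) forces the latter to be empty, giving $t_{\nu\lambda}^* t_{\mu\lambda}=0$ and hence $\phi(t_{\mu\lambda}t_{\nu\lambda}^*)=0$.

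For (b) I would argue by induction on $j$; the base case $j=0$ is automatic since $s(\mu)\Lambda^0 = \{s(\mu)\}$ and $t_{\mu s(\mu)}=t_\mu$. Set
\[
R_j := q_{s(\mu)} - \sum_{\lambda\in s(\mu)\Lambda^{jn}} t_\lambda t_\lambda^*,
\]
which is an $\alpha$-invariant projection by (T4); identity \eqref{eqAidansform} is equivalent to $\phi(t_\mu R_j t_\nu^*) = 0$. Using the KMS condition together with $\alpha$-invariance of $R_j$, and then (T5) on $t_\nu^* t_\mu$, one obtains
\[
\phi(t_\mu R_j t_\nu^*) = e^{-\beta r\cdot d(\mu)}\phi(t_\nu^* t_\mu R_j) = e^{-\beta r\cdot d(\mu)}\sum_{(\alpha,\eta)\in\Lambda^{\min}(\nu,\mu)}\phi(t_\alpha t_\eta^* R_j),
\]
where every such pair satisfies $d(\eta)=n$, $r(\eta)=s(\mu)$, and $s(\alpha)=s(\eta)$. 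The key algebraic step is the identity
\[
t_\eta^* R_j = R_{j-1}^{s(\eta)}\,t_\eta^*, \qquad\text{where}\qquad R_{j-1}^{s(\eta)} := q_{s(\eta)} - \sum_{\gamma\in s(\eta)\Lambda^{(j-1)n}} t_\gamma t_\gamma^*.
\]
This follows from $t_\eta^* q_{s(\mu)} = t_\eta^*$ together with the observation that, by (T5) and the degree count $d(\eta)\vee d(\lambda)=jn$, the set $\Lambda^{\min}(\eta,\lambda)$ is nonempty precisely when $\lambda$ factors as $\eta\gamma$ with $\gamma\in s(\eta)\Lambda^{(j-1)n}$, in which case $t_\eta^* t_\lambda = t_\gamma$; summing yields $t_\eta^*\sum_\lambda t_\lambda t_\lambda^* = P_{j-1}^{s(\eta)} t_\eta^*$.

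To close the induction I would check that for each $(\alpha,\eta)\in\Lambda^{\min}(\nu,\mu)$ the ``new $n$'' equals the old one: the vectors $d(\eta)=(d(\mu)\vee d(\nu))-d(\mu)$ and $d(\alpha)=(d(\mu)\vee d(\nu))-d(\nu)$ have coordinate-wise disjoint supports, so $d(\alpha)\vee d(\eta) = d(\alpha)+n$, whence $(d(\alpha)\vee d(\eta)) - d(\alpha) = n$. The inductive hypothesis at level $j-1$ applied to the pair $(\alpha,\eta)$ is therefore exactly $\phi(t_\alpha R_{j-1}^{s(\eta)} t_\eta^*) = 0$, and summing gives $\phi(t_\mu R_j t_\nu^*) = 0$. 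The main obstacle, I expect, is recognising this self-similarity: a straight Cauchy--Schwarz estimate (as in Lemma~\ref{CSestimate}) only bounds $|\phi(t_\mu R_j t_\nu^*)|$ by a power of $\phi(R_j)$, which need not vanish without an additional spectral-radius hypothesis on $\beta$; the algebraic identity $t_\eta^* R_j = R_{j-1}^{s(\eta)} t_\eta^*$ combined with the invariance of $n$ under passing to $(\alpha,\eta)$ is what actually makes the induction carry through.
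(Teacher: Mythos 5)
Your proof is correct, and for part~\eqref{Aidansmagicform} it takes a genuinely different route from the paper's. Part~\eqref{mostly0} is essentially the paper's argument: (T5) gives $t_{\nu\lambda}^*t_{\mu\lambda}=0$ (the coordinate-swap bijection between $\Lambda^{\min}(\mu\lambda,\nu\lambda)$ and $\Lambda^{\min}(\nu\lambda,\mu\lambda)$, which you make explicit, is left implicit there) and then the KMS condition finishes. For part~\eqref{Aidansmagicform} the paper also inducts on $j$, but in the opposite direction and without ever changing the pair: it keeps $(\mu,\nu)$ fixed, writes each nonvanishing summand at level $j$ as $\phi\big(t_{\nu\lambda}(t_{\nu\lambda}^*t_{\mu\lambda})t_{\nu\lambda}^*\big)$ via a KMS manipulation, expands the middle with (T5) into a sum over $(\eta,\zeta)\in\Lambda^{\min}(\nu\lambda,\mu\lambda)$, and then uses part~\eqref{mostly0} to check that $(\lambda,\zeta)\mapsto\lambda\zeta$ enumerates exactly the nonvanishing terms of the level-$(j{+}1)$ sum. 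You instead peel off the first block: the pair $(\mu,\nu)$ at level $j$ is reduced to the pairs $(\alpha,\eta)\in\Lambda^{\min}(\nu,\mu)$ at level $j-1$, with the path-counting absorbed into the commutation identity $t_\eta^*R_j=R_{j-1}^{s(\eta)}t_\eta^*$ (which is correct: for $d(\eta)=n$ and $d(\lambda)=jn$ one has $d(\eta)\vee d(\lambda)=jn$, so $t_\eta^*t_\lambda$ is $t_{\lambda(n,jn)}$ or $0$ according as $\lambda(0,n)=\eta$ or not). The price is that your induction hypothesis must be quantified over all pairs with common source, and you must verify that the exponent $n$ is unchanged when passing to $(\alpha,\eta)$; your verification via the disjoint supports of $d(\alpha)$ and $d(\eta)$ is exactly the content of the paper's Lemma~\ref{minimal}. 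Both arguments rest on the same two ingredients, the KMS condition plus (T5) and the degree bookkeeping, so neither is more general; yours localises the combinatorics in a single reusable operator identity, while the paper's tracks the paths $\tau=\lambda\zeta$ explicitly. Two cosmetic points: the symbol $P_{j-1}^{s(\eta)}$ in your display should be $R_{j-1}^{s(\eta)}$, and the step producing $\phi(t_\nu^*t_\mu R_j)$ should be read as applying the KMS condition to the analytic element $a=t_\mu R_j$ (which satisfies $\alpha_z(t_\mu R_j)=e^{izr\cdot d(\mu)}t_\mu R_j$ because $R_j$ is gauge-invariant) and $b=t_\nu^*$; with that reading it is exactly right.
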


\begin{proof}
\eqref{mostly0} If $\Lambda^{\min}(\mu\lambda,\nu\lambda)=\emptyset$, then the Toeplitz-Cuntz-Krieger relation (T5) implies that $t_{\nu\lambda}^*t_{\mu\lambda}=0$, and the KMS condition gives
\[
\phi(t_{\mu\lambda}t_{\nu\lambda}^*)=e^{-\beta r\cdot d(\mu\lambda)}\phi(t_{\nu\lambda}^*t_{\mu\lambda})=0. 
\]

We will prove \eqref{Aidansmagicform} by induction on $j$. It is trivially true for $j=0$. Suppose we have \eqref{eqAidansform} for some $j\geq 0$. Part~\eqref{mostly0} says that in the sum on the right of \eqref{eqAidansform}, we need only consider $\lambda\in s(\mu)\Lambda^{jn}$ such that $\Lambda^{\min}(\mu\lambda,\nu\lambda)$ is nonempty. The KMS condition implies that 
\[
\phi(t_{\nu\lambda}t_{\nu\lambda}^*t_{\mu\lambda}t_{\nu\lambda}^*)=
e^{-\beta r\cdot (d(\nu\lambda)-d(\nu\lambda))}\phi(t_{\mu\lambda}t_{\nu\lambda}^*t_{\nu\lambda}t_{\nu\lambda}^*)=\phi(t_{\mu\lambda}t_{\nu\lambda}^*).
\]
Thus (T5) gives
\begin{align*}
\phi(t_{\mu\lambda}t_{\nu\lambda}^*)
&=\phi(t_{\nu\lambda}(t_{\nu\lambda}^*t_{\mu\lambda})t_{\nu\lambda}^*)\label{TCKindstep}\\
&=\sum_{(\eta,\zeta)\in\Lambda^{\min}(\nu\lambda,\mu\lambda)}\phi(t_{\nu\lambda\eta}t_{\nu\lambda\zeta}^*)\notag\\
&=\sum_{(\eta,\zeta)\in\Lambda^{\min}(\nu\lambda,\mu\lambda)}\phi(t_{\mu\lambda\zeta}t_{\nu\lambda\zeta}^*)\notag.
\end{align*}
Combining this with  the induction hypothesis gives
\begin{equation}\label{finishoff}
\phi(t_{\mu\lambda}t_{\nu\lambda}^*)=\sum_{\lambda\in s(\mu)\Lambda^{jn}}\phi(t_{\mu\lambda}t_{\nu\lambda}^*)
=\sum_{\lambda\in s(\mu)\Lambda^{jn}}\sum_{(\eta,\zeta)\in\Lambda^{\min}(\nu\lambda,\mu\lambda)}\phi(t_{\mu\lambda\zeta}t_{\nu\lambda\zeta}^*). 
\end{equation}
For $(\eta,\zeta)\in\Lambda^{\min}(\nu\lambda,\mu\lambda)$, we have
\[
d(\mu\lambda)+d(\zeta)=d(\nu\lambda)\vee d(\mu\lambda)=(d(\mu)\vee d(\nu))+d(\lambda),
\]
which implies $d(\zeta)=(d(\mu)\vee d(\nu))-d(\mu)=n$. Thus $d(\lambda\zeta)=(j+1)n$. 

Now suppose $\tau\in s(\mu)\Lambda^{(j+1)n}$ and $\phi(t_{\mu\tau}t_{\nu\tau}^*)\not=0$. Then part~\eqref{mostly0} implies that there exists $(\gamma,\delta)\in\Lambda^{\min}(\mu\tau, \nu\tau)$, and  then $\mu\tau\gamma=\nu\tau\delta$.  But then with $\lambda:=\tau(0,jn)$, the paths $\zeta:=\tau(jn,(j+1)n)$ and $\eta:=(\tau\delta)(jn,jn+(d(\mu)\vee d(\nu))-d(\nu))$ give a pair $(\eta,\zeta)$ in $\Lambda^{\min}(\nu\lambda,\mu\lambda)$ such that $\tau=\lambda\zeta$. Thus \eqref{finishoff} gives
\[
\phi(t_{\mu\lambda}t_{\nu\lambda}^*)=\sum_{\tau\in s(\mu)\Lambda^{(j+1)n}}\phi(t_{\mu\tau}t_{\nu\tau}^*),
\]
and this is \eqref{eqAidansform} for $j+1$.
\end{proof}

\begin{proof}[Proof of Theorem~\ref{improvedidKMS}]
Suppose that $\phi$ is a KMS$_\beta$ state, and take $\mu,\nu\in \Lambda$ with $s(\mu)=s(\nu)$ and $d(\mu)\not= d(\nu)$. As observed before Lemma~\ref{CSestimate}, it remains for us to deal with the case where $r\cdot d(\mu)=r\cdot d(\nu)$. Since $d(\mu)\not= d(\nu)$, at least one of $(d(\mu)\vee d(\nu))-d(\mu)$ or $(d(\mu)\vee d(\nu))-d(\nu)$ is nonzero. Since $\phi(s_\nu s_\mu^*)=0\Longleftrightarrow\phi(s_\mu s_\nu^*)=0$, we may suppose that $n:=(d(\mu)\vee d(\nu))-d(\mu)$ is nonzero. Then for $j\in \NN$, Lemma~\ref{lemmagic} gives 
\[
\phi(t_\mu t_\nu^*)=\sum_{\lambda\in s(\mu)\Lambda^{jn}}\phi(t_{\mu\lambda} t_{\nu\lambda}^*).
\]
For each $\lambda\in s(\mu)\Lambda^{jn}$ we have $r\cdot d(\mu\lambda)=r\cdot d(\nu\lambda)$, and hence  Lemma~\ref{CSestimate} implies that
\begin{align*}
|\phi(t_\mu t_\nu^*)|&\leq\sum_{\lambda\in s(\mu)\Lambda^{jn}}\big|\phi(t_{\mu\lambda} t_{\nu\lambda}^*)\big|\\
&\leq\sum_{\lambda\in s(\mu)\Lambda^{jn}}\phi(t_{\mu\lambda} t_{\mu\lambda}^*)\\
&=\sum_{\lambda\in s(\mu)\Lambda^{jn}}e^{-\beta r\cdot d(\mu\lambda)}\phi(q_{s(\lambda)})\quad\text{(using~\eqref{propKMSbetak})}\\
&=\sum_{w\in\Lambda^0}|s(\mu)\Lambda^{jn}w|e^{-\beta r\cdot (d(\mu)+jn)}\phi(q_{w}).
\end{align*}
Now we recall from \S\ref{PFback} that the number $|v\Lambda^nw|$ is the $(v,w)$ entry in the matrix $A^n:=\prod_{i=1}^k A_i^{n_i}$. Thus for each $j\in\NN$, we have
\begin{equation}\label{estusingsr}
|\phi(t_\mu t_\nu^*)|\leq e^{-\beta r\cdot d(\mu)}\sum_{w\in \Lambda^0}\Big(\prod_{i=1}^{k}e^{-\beta r_ijn_i}A_i^{jn_i}\Big)(s(\mu),w)\phi(q_{w}).
\end{equation}
For each $i$ such that $n_i>0$, $e^{-\beta r_ijn_i}A_i^{jn_i}$ is the $(jn_i)$th term in the series $\sum e^{-\beta r_im}A_i^{m}$;  since $\beta r_i>\ln\rho(A_i)$ the series converges in the operator norm to $(1-e^{-\beta r_i}A_i)^{-1}$. In particular, for each such $i$, we have $e^{-\beta r_ijn_i}A_i^{n_i}\to 0$ as $j\to \infty$. Since $n\not=0$, there is at least one $i$ such that $n_i>0$. Thus as $j\to \infty$, the right-hand side of \eqref{estusingsr} converges to $0$, and we deduce that $|\phi(t_\mu t_\nu^*)|=0$.
\end{proof}

\begin{rmk}\label{extrasym}
The property \eqref{charKMSbetak} says that every KMS state factors through the core $\T C^*(\Lambda)^\gamma$. By \cite[Proposition~8.12.4]{P}, every KMS state is invariant for the action $\alpha$. Thus if the subgroup $e^{i\RR r}$ is dense in $\TT^k$, which is the case when the $r_i$ are rationally independent, continuity of the gauge action $\gamma$ implies that every KMS state of $(\T C^*(\Lambda),\alpha)$ is $\gamma$-invariant. Not all vectors $r$ give dense subgroups of $\TT^k$, but \eqref{charKMSbetak} still says that KMS states are $\gamma$-invariant. In other words, the set of KMS states has more symmetry than mere $\alpha$-invariance implies. The existence of extra symmetry has been an important factor in other computations of KMS states, most notably in the analysis of Bost and Connes, where an action of a large Galois group on the set of KMS states was used in the proof of uniqueness at the critical inverse temperature (see \cite[Lemma~27]{BC}, or \cite[Lemma~A.3]{LR} for a simpler proof).
\end{rmk}

\section{KMS states on the Toeplitz algebra at large inverse temperatures}\label{sec:toeplitzKMS}

\begin{thm}\label{mainthmk}
Let $\Lambda$ be a finite $k$-graph without sources, and let $A_i$ be the vertex matrices of
$\Lambda$. Suppose that $r\in (0,\infty)^k$ satisfies $\beta r_i>\ln\rho(A_i)$ for $1\leq i\leq k$, and define $\alpha:\RR\to \Aut \T
C^*(\Lambda)$ by $\alpha_t:=\gamma_{e^{itr}}$. 
\begin{enumerate}
\item\label{bk} For $v\in \Lambda^0$, the series $\sum_{\mu\in \Lambda v}e^{-\beta r\cdot d(\mu)}$ converges with sum $y_v\geq 1$. Set $y = (y_v) \in [1,\infty)^{\Lambda^0}$, and consider $\epsilon\in [0,\infty)^{\Lambda^0}$. Then $m:=\prod_{i=1}^k(1-e^{-\beta r_i}A_i)^{-1}\epsilon$ satisfies $A_im\leq e^{\beta r_i}m$ for all $i$; $m$ is a
    probability measure on $\Lambda^0$ if and only if $\epsilon\cdot y=1$.
\item\label{ck} Suppose $\epsilon\in [0,\infty)^{\Lambda^0}$ satisfies $\epsilon\cdot y=1$, and
    set $m:=\prod_{i=1}^k(1-e^{-\beta r_i}A_i)^{-1}\epsilon$. Then there is a KMS$_\beta$ state
    $\phi_\epsilon$ of $(\T C^*(\Lambda), \alpha)$ satisfying
    \begin{equation}\label{charKMSepk}
        \phi_\epsilon(t_\mu t_\nu^*)=\delta_{\mu,\nu}e^{-\beta r\cdot d(\mu)}m_{s(\mu)}.
    \end{equation}
\item\label{dohk} The map $\epsilon\mapsto\phi_\epsilon$ is an affine isomorphism of
    \[
    \Sigma_\beta:=\{\epsilon\in [0,\infty)^{\Lambda^0}:\epsilon\cdot y=1\}
    \]
    onto the simplex of KMS${}_\beta$ states of $(\T C^*(\Lambda), \alpha)$. The inverse of this
    isomorphism takes a KMS$_\beta$ state $\phi$ to $\prod_{i=1}^k(1-e^{-\beta r_i}A_i)m^\phi$,
    where $m^\phi=(\phi(q_v))$.
\end{enumerate}
\end{thm}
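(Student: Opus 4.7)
The plan is to prove the three parts in order, leveraging Lemma~\ref{resolventk} for the combinatorics in~(a), the path representation of $\T C^*(\Lambda)$ for the construction in~(b), and Theorem~\ref{improvedidKMS} as the decisive ingredient for surjectivity in~(c).

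For part~(a), I first write the paths in $\Lambda v$ by degree,
\[
y_v = \sum_{n\in\NN^k} e^{-\beta r\cdot n}\,|\Lambda^n v| = \sum_{n} e^{-\beta r\cdot n}\sum_{w\in\Lambda^0} A^n(w,v),
\]
and invoke Lemma~\ref{resolventk} to identify the total as $\sum_w R(w,v)$, where $R:=\prod_{i=1}^k (1-e^{-\beta r_i}A_i)^{-1}$; the $n=0$ term forces $y_v\geq 1$. For $m:=R\epsilon$, factoring out one resolvent yields
\[
(1-e^{-\beta r_i}A_i)m = \Big(\prod_{j\neq i}(1-e^{-\beta r_j}A_j)^{-1}\Big)\epsilon,
\]
which is coordinatewise nonnegative because each resolvent expands as $\sum_{n_j\geq 0} e^{-\beta r_j n_j}A_j^{n_j}$ with nonnegative entries and $\epsilon\geq 0$; this gives $A_i m\leq e^{\beta r_i}m$. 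Summing over vertices yields $\sum_v m_v = \mathbf{1}^T R\epsilon = y\cdot\epsilon$, so $m$ is a probability measure precisely when $\epsilon\cdot y=1$.

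For part~(b), by analogy with the $k=1$ construction of~\cite{aHLRSk=1}, I define the averaged vector state
\[
\phi_\epsilon(a) := \sum_{w\in\Lambda^0}\epsilon_w\sum_{\lambda\in\Lambda w} e^{-\beta r\cdot d(\lambda)}\,\big(\pi_{T,Q}(a)h_\lambda\,\big|\,h_\lambda\big).
\]
The scalar weights sum to $\epsilon\cdot y=1$ by part~(a), so this converges absolutely to a convex combination of vector states and is therefore a state. To verify~\eqref{charKMSepk}, I observe that $\pi_{T,Q}(t_\mu t_\nu^*)h_\lambda$ equals $h_{\mu\lambda'}$ when $\lambda=\nu\lambda'$ and vanishes otherwise, so the diagonal pairing is nonzero only when $\mu\lambda'=\nu\lambda'=\lambda$, which forces $\mu=\nu$ and $\lambda\in\mu\Lambda$; re-indexing by $\lambda=\mu\lambda'$ and using $\sum_w\epsilon_w R(s(\mu),w)=m_{s(\mu)}$ produces exactly $e^{-\beta r\cdot d(\mu)}m_{s(\mu)}$. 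Proposition~\ref{idKMSbetak}\,\eqref{preak} then promotes $\phi_\epsilon$ to a KMS$_\beta$ state.

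For part~(c), the map $\epsilon\mapsto\phi_\epsilon$ is affine by construction, and injective since $m^{\phi_\epsilon}=R\epsilon$ recovers $\epsilon=\prod_i(1-e^{-\beta r_i}A_i)m^{\phi_\epsilon}$. For surjectivity, given any KMS$_\beta$ state $\phi$, set $\epsilon^\phi:=\prod_i(1-e^{-\beta r_i}A_i)m^\phi$: Proposition~\ref{propsofm}\,\eqref{ak} with $K=\{1,\dots,k\}$ makes $\epsilon^\phi$ nonnegative, and $\sum_v m^\phi_v=\phi(1)=1$ combined with part~(a) forces $\epsilon^\phi\cdot y=1$. Since $\phi_{\epsilon^\phi}$ and $\phi$ share the vector $m^\phi$ and both satisfy~\eqref{charKMSbetak2}, they agree on every $t_\mu t_\nu^*$ and hence on all of $\T C^*(\Lambda)$. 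The main obstacle sits inside surjectivity: everything hinges on knowing that a KMS$_\beta$ state $\phi$ is determined by the single scalar vector $(\phi(q_v))_{v\in\Lambda^0}$. Proposition~\ref{idKMSbetak} supplies this only on the diagonal $d(\mu)=d(\nu)$, or in general under rational independence of the $r_i$'s; in the present generality one genuinely needs Theorem~\ref{improvedidKMS}, whose proof relies on the hypothesis $\beta r_i>\ln\rho(A_i)$, to kill the off-diagonal entries $\phi(t_\mu t_\nu^*)$ with $d(\mu)\neq d(\nu)$.
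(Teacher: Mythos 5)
Your proposal is correct and follows essentially the same route as the paper: part (a) via the degree decomposition and Lemma~\ref{resolventk}, part (b) via the weighted sum of vector states in the path representation together with Proposition~\ref{idKMSbetak}\,\eqref{preak}, and part (c) via the subinvariance from Proposition~\ref{propsofm}\,\eqref{ak} plus Theorem~\ref{improvedidKMS} to see that a KMS$_\beta$ state is determined by $(\phi(q_v))$. You have correctly identified Theorem~\ref{improvedidKMS} as the ingredient that makes surjectivity work without rational independence of the $r_i$, which is exactly the role it plays in the paper.
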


\begin{proof}[Proof of Theorem~\ref{mainthmk}\,\eqref{bk}]
Let $v\in \Lambda^0$. Since $A^n(w,v):=|w\Lambda^n v|$, we have
\begin{align}\label{calcyk}
\sum_{\mu\in \Lambda v}e^{-\beta r\cdot d(\mu)}&=\sum_{n\in \NN^k}\sum_{\mu\in \Lambda^nv}e^{-\beta r\cdot n}\\
&=\sum_{n\in \NN^k}\sum_{w\in \Lambda^0}e^{-\beta r\cdot n}|w\Lambda^nv|\notag\\
&=\sum_{n\in \NN^k} \sum_{w\in \Lambda^0} e^{-\beta r\cdot  n}A^n(w,v).\notag
\end{align}
Lemma~\ref{resolventk} implies that $\sum_{n=0}^\infty e^{-\beta r\cdot n}A^n$ converges in the
operator norm. So for every fixed $w\in \Lambda^0$ the series $\sum_{n=0}^\infty e^{-\beta r\cdot
n}A^n(w,v)$ converges. Thus the last sum in \eqref{calcyk} converges, and the sum is at least $1$
because all the $e^{-\beta n}A^n(v,w)$ are non-negative and $e^{-\beta 0}A^0(v,v)=1$.

Because $A$ is a non-negative matrix, the infinite expansion $m=\sum_{n\in\NN^k} e^{-\beta r\cdot
n}A^n\epsilon$ from Lemma~\ref{resolventk} shows that $m\geq 0$, and we use the same expansion to
compute
\begin{align*}
m(\Lambda^0)&=\sum_{v\in \Lambda^0}m_v=\sum_{v\in \Lambda^0}\sum_{n\in\NN^k} \big(e^{-\beta r\cdot n}A^n\epsilon\big)_v\\
&=\sum_{v\in \Lambda^0}\Big(\Big(\sum_{n\in\NN^k} e^{-\beta r\cdot n}A^n\Big)\epsilon\Big)_v
=\sum_{v\in \Lambda^0}\sum_{n\in\NN^k} \sum_{w\in \Lambda^0}e^{-\beta r\cdot n}A^n(v,w)\epsilon_w\\
&=\sum_{w\in \Lambda^0}\epsilon_w\Big(\sum_{v\in \Lambda^0}\sum_{n\in \NN^k} e^{-\beta r\cdot n}|v\Lambda^n w|\Big)
=\sum_{w\in \Lambda^0}\epsilon_w\Big( \sum_{\mu\in \Lambda w} e^{-\beta r\cdot d(\mu)} \Big)\notag\\
&=\epsilon\cdot y,
\end{align*}
which gives the last statement in \eqref{bk}.
\end{proof}

\begin{proof}[Proof of Theorem~\ref{mainthmk}\,\eqref{ck}]
To construct $\phi_\epsilon$, we use the path representation $\pi_{T,Q}$ of $\T C^*(\Lambda)$ on
$\ell^2(\Lambda)$. We define weights
\begin{equation}\label{defDeltak}
\Delta_\lambda=e^{-\beta r\cdot d(\lambda)}\epsilon_{s(\lambda)},
\end{equation}
and aim to define $\phi_\epsilon$ by
\begin{equation}\label{claimstatek}
\phi_\epsilon(a)=\sum_{\mu\in \Lambda}\Delta_\mu(\pi_{T,Q}(a)h_\mu\,|\, h_\mu)\quad\text{for $a\in \T C^*(\Lambda)$.}
\end{equation}

To see that \eqref{claimstatek} defines a state, we need to show that $\sum_{\mu\in
\Lambda}\Delta_\mu=1$. We begin by fixing $v\in \Lambda^0$ and computing
\begin{align}\label{calcsumk}
\sum_{\mu \in v \Lambda} \Delta_\mu
    &= \sum_{n\in \NN^k} \sum_{\mu \in v \Lambda^n} e^{-\beta r\cdot n}\epsilon_{s(\mu)}
     = \sum_{n\in \NN^k} e^{-\beta r\cdot n}\Big(\sum_{w\in \Lambda^0}\sum_{\mu\in v\Lambda^nw}\epsilon_{w}\Big)\\
    &= \sum_{n\in \NN^k} e^{-\beta r\cdot n}\Big(\sum_{w\in \Lambda^0}A^n(v,w)\epsilon_{w}\Big)
     = \sum_{n\in \NN^k} (e^{-\beta r\cdot n}A^n\epsilon)_{v};\notag
\end{align}
Lemma~\ref{resolventk} implies that this last series converges with sum $m_v=\big(\prod_{i=1}^k(1-e^{-\beta
r_i}A_i)^{-1}\epsilon\big)_v$. Part~\eqref{bk} implies that $m$ is a probability measure, giving
$\sum_{\mu\in \Lambda}\Delta_\mu=\sum_{v\in \Lambda^0}m_v=1$. Thus the formula \eqref{claimstatek}
defines a state $\phi_\epsilon$ on $\T C^*(\Lambda)$.

To prove that $\phi_\epsilon$ is a KMS$_\beta$ state, we verify \eqref{charKMSbetak}. For $\lambda
\in \Lambda$ we have
\[
(\pi_{T,Q}(t_\mu t^*_\nu) h_\lambda \, |\,  h_\lambda) = (T_\nu^* h_\lambda \, |\,  T^*_\mu h_\lambda)= \begin{cases}
1 &\text{ if $\lambda = \mu\lambda' = \nu\lambda'$} \\
0 &\text{ otherwise.}
\end{cases}
\]
By unique factorisation, $\mu\lambda' = \nu\lambda'$ implies $\mu = \nu$, and hence
$\phi_\epsilon(t_\mu t^*_\nu) = 0$ when $\mu \not= \nu$. If $\mu = \nu$, then the calculation
\eqref{calcsumk} gives
\begin{align*}
\phi_\epsilon(t_\mu t^*_\mu)
&= \sum_{\lambda \in \Lambda} \Delta_\lambda (T^*_\mu h_\lambda\, |\, T_\mu^*  h_\lambda)
= \sum_{\lambda=\mu\lambda'} e^{-\beta r\cdot d(\mu\lambda')}\epsilon_{s(\lambda')}\\
&=e^{-\beta r\cdot d(\mu)}\sum_{\lambda'\in s(\mu)\Lambda}\Delta_{\lambda'}=e^{-\beta r\cdot d(\mu)}m_{s(\mu)}.
\end{align*}
The calculation \eqref{calcsumk} gives
\[
\phi_\epsilon(q_{s(\mu)})=\sum_{\lambda\in \Lambda}\Delta_\lambda (Q_{s(\mu)}h_\lambda\,|\, h_\lambda)= \sum_{\lambda\in s(\mu)\Lambda}\Delta_\lambda= m_{s(\mu)}
\]
and we deduce that $\phi_\epsilon$ satisfies \eqref{charKMSbetak}. Now Proposition~\ref{idKMSbetak}\,\eqref{preak} implies that $\phi_\epsilon$ is a
KMS$_\beta$ state.
\end{proof}

\begin{proof}[Proof of Theorem~\ref{mainthmk}\,\eqref{dohk}]
Suppose $\phi$ is a KMS$_\beta$ state. Then Proposition~\ref{propsofm}\,\eqref{ak} implies that
$m^\phi=(\phi(q_v))$ is a probability measure satisfying $\epsilon:=\prod_{i=1}^k(1-e^{-\beta
r_i}A_i)m^\phi\geq 0$. Then $m:=\prod_{i=1}^k(1-e^{-\beta r_i}A_i)^{-1}\epsilon$ is just $m^\phi$,
and comparing the formula \eqref{charKMSbetak2} for $\phi$ in Theorem~\ref{improvedidKMS} with \eqref{charKMSepk} shows that $\phi=\phi_\epsilon$. So $F:\epsilon\mapsto \phi_\epsilon$ is surjective.

The formula \eqref{charKMSepk} also shows that $F$ is
injective, and that $F$ is weak* continuous from $\Sigma_\beta\subset \RR^{\Lambda^0}$ to the state
space of $\T C^*(\Lambda)$. Thus $F$ is a homeomorphism of the compact space $\Sigma_\beta$ onto
the simplex of KMS$_\beta$ states. The formulas \eqref{defDeltak} and \eqref{claimstatek} show that
$F$ is affine, and the formula for the inverse follows from the proof of surjectivity.
\end{proof}

\section{KMS states at the critical inverse temperature}\label{sec:CK-KMS}

 The next proposition follows from Lemma~\ref{lem:common F-B
evctr}.

\begin{prop}\label{cor:k-graph PR evctr}
Let $\Lambda$ be a coordinatewise-irreducible finite $k$-graph. Then the unimodular
Perron-Frobenius eigenvectors of the $A_i$ are equal for $1\leq i\leq k$; we denote this vector
$x^\Lambda$ and call it the unimodular Perron-Frobenius eigenvector of $\Lambda$. Write
$\rho(\Lambda)$ for the vector $(\rho(A_i))\in [0,\infty)^k$. Then for $n\in \NN^k$, $x^\Lambda$ is
the Perron-Frobenius eigenvector of $A^n:=\prod_{i=1}^kA_i^{n_i}$ with eigenvalue $\rho(\Lambda)^n$
(defined using multi-index notation).
\end{prop}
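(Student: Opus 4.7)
The plan is to reduce the proposition to Lemma~\ref{lem:common F-B evctr}. For the first assertion, recall that the vertex matrices $A_1,\ldots,A_k$ commute pairwise (this was verified in \S\ref{PFback} using the factorisation property) and by coordinatewise irreducibility each $A_i$ is irreducible. So for any $i$ I would apply Lemma~\ref{lem:common F-B evctr} to the commuting irreducible pair $(A_1,A_i)$ to deduce that the unimodular Perron-Frobenius eigenvectors of $A_1$ and of $A_i$ agree. Letting $i$ vary over $\{1,\ldots,k\}$ shows that there is a single unimodular Perron-Frobenius eigenvector common to all the $A_i$, which we christen $x^\Lambda$.

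For the second assertion, since each $A_ix^\Lambda=\rho(A_i)x^\Lambda$ and the $A_i$ commute, a direct computation using multi-index notation gives
\[
A^n x^\Lambda=\Big(\prod_{i=1}^k A_i^{n_i}\Big)x^\Lambda=\Big(\prod_{i=1}^k\rho(A_i)^{n_i}\Big)x^\Lambda=\rho(\Lambda)^n x^\Lambda.
\]
Thus $x^\Lambda$ is a strictly positive eigenvector of $A^n$ with eigenvalue $\rho(\Lambda)^n$.

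The only subtlety, and the step where I would be most careful, is justifying that $\rho(\Lambda)^n$ actually deserves to be called the Perron-Frobenius eigenvalue of $A^n$: the matrix $A^n$ is non-negative but need not be irreducible, so the version of \cite[Theorem~1.5]{Seneta} quoted in \S\ref{PFback} does not apply directly. I would invoke the elementary fact that if a non-negative matrix $B$ has a strictly positive eigenvector $v$ with eigenvalue $\lambda$, then $\lambda=\rho(B)$: indeed, choosing $C>0$ with $Cv\geq\mathbf{1}$ entrywise, one obtains $\|B^m y\|_\infty\leq C\lambda^m\|v\|_\infty\|y\|_\infty$ for every $y\geq 0$, so $\rho(B)\leq\lambda$, while $\lambda\leq\rho(B)$ because $\lambda$ is an eigenvalue. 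Applied to $B=A^n$ and $v=x^\Lambda$ this yields $\rho(A^n)=\rho(\Lambda)^n$ and completes the proof.
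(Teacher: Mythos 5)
Your proof is correct and follows essentially the same route as the paper, which simply asserts that the proposition ``follows from Lemma~\ref{lem:common F-B evctr}'' and applies that lemma pairwise exactly as you do. Your extra step identifying $\rho(A^n)=\rho(\Lambda)^n$ via the positivity of $x^\Lambda$ (needed because $A^n$ need not itself be irreducible) is a legitimate point that the paper leaves implicit, and your elementary argument for it is sound.
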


As we saw in Remark~\ref{pref}, Corollary~\ref{restrictbeta} suggests that there is a preferred dynamics on $\T
C^*(\Lambda)$ in which 
\[
r= \ln\rho(\Lambda):=(\ln\rho(A_1),\cdots,\ln\rho(A_k)) \in (0,\infty)^k, 
\]
and now we have $\beta r_i>\ln\rho(A_i)$ for all $i$ if and only if $\beta>1$. In other words, the critical inverse temperature $\beta_c$ is $1$. 

Provided the $\ln\rho(A_i)$ are rationally independent, we can describe
all the KMS states for the preferred dynamics.   

\begin{thm}\label{criticalbeta}
Suppose that $\Lambda$ is a coordinatewise-irreducible finite $k$-graph such that the $\ln\rho(\Lambda_j)$ are rationally independent. Let $\gamma:\TT^k\to \Aut C^*(\Lambda)$ be the
gauge action, and define $\alpha:\RR\to \Aut \T C^*(\Lambda)$ by
\[
\alpha_t=\gamma_{e^{it\ln \rho(\Lambda)}}=\gamma_{\rho(\Lambda)^{it}}:={\textstyle\prod_{j=1}^k\gamma_{\ln\rho(\Lambda_j)^{it}}}.
\]
Let $x^\Lambda$ be the unimodular Perron-Frobenius eigenvector for $\Lambda$. Then there is a unique KMS$_1$ state
$\phi$ for $(\T C^*(\Lambda),\alpha)$. The state $\phi$ satisfies
\begin{equation}\label{KMS1}
\phi(t_\mu t^*_\nu) = \delta_{\mu,\nu} \rho(\Lambda)^{-d(\mu)} x_{s(\mu)}\quad\text{ for all }\mu,\nu \in \Lambda,
\end{equation}
and this state $\phi$ factors through a state $\bar\phi$ of the quotient $C^*(\Lambda)$. The state $\bar\phi$ is
the only KMS state for $(C^*(\Lambda),\alpha)$.
\end{thm}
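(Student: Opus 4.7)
The plan is first to reduce KMS$_1$ states of $\T C^*(\Lambda)$ to their restriction to the vertex projections, then to use the left Perron-Frobenius eigenvector to upgrade the subinvariance inequality of Proposition~\ref{propsofm}\,\eqref{ak} to an eigenvector identity (giving uniqueness), then to construct the state by a weak*-limit argument from $\beta>1$ (giving existence and factorisation through $C^*(\Lambda)$), and finally to invoke Corollary~\ref{restrictbeta2} to reduce uniqueness on $C^*(\Lambda)$ to uniqueness on $\T C^*(\Lambda)$. Since $r=\ln\rho(\Lambda)$ has rationally independent coordinates by hypothesis, Proposition~\ref{idKMSbetak}\,\eqref{preak} implies that a state $\phi$ of $\T C^*(\Lambda)$ is KMS$_1$ for $\alpha$ if and only if
\[
\phi(t_\mu t_\nu^*)=\delta_{\mu,\nu}\rho(\Lambda)^{-d(\mu)}\phi(q_{s(\mu)})\quad\text{for all $\mu,\nu\in\Lambda$,}
\]
so $\phi$ is completely determined by the probability vector $m^\phi=(\phi(q_v))\in[0,1]^{\Lambda^0}$.

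The key step, and the one I expect to be the main obstacle, is the subinvariance-to-invariance upgrade at $\beta=1$. Proposition~\ref{propsofm}\,\eqref{ak} applied to the singletons $K=\{i\}$ gives $A_im^\phi\leq\rho(A_i)m^\phi$ for each $i$; pairing the non-negative vector $\rho(A_i)m^\phi-A_im^\phi$ with the positive left Perron-Frobenius eigenvector $y^{(i)}$ of $A_i$ yields
\[
0\leq y^{(i)}\cdot(\rho(A_i)m^\phi-A_im^\phi)=\rho(A_i)(y^{(i)}\cdot m^\phi)-(y^{(i)}A_i)\cdot m^\phi=0,
\]
so the non-negative vector is actually zero and $A_im^\phi=\rho(A_i)m^\phi$ pointwise. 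Since $m^\phi$ is a nonzero non-negative eigenvector of each irreducible $A_i$, Proposition~\ref{cor:k-graph PR evctr} identifies $m^\phi$ with the common unimodular Perron-Frobenius eigenvector $x^\Lambda$, which proves uniqueness of the KMS$_1$ state and the formula \eqref{KMS1}.

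Existence on $\T C^*(\Lambda)$ is not directly available from Theorem~\ref{mainthmk} at $\beta=1$ because the resolvents $(1-\rho(A_i)^{-1}A_i)^{-1}$ no longer exist; instead I take a sequence $\beta_n\downarrow 1$, pick a KMS$_{\beta_n}$ state $\phi_n$ from the nonempty simplex produced by Theorem~\ref{mainthmk}, and extract a weak*-convergent subsequence $\phi_n\to\phi$ using compactness of the state space. On analytic spanning elements $a=t_\mu t_\nu^*$ and $b=t_\sigma t_\tau^*$, the KMS$_{\beta_n}$ condition reads
\[
\phi_n(ab)=e^{-\beta_n r\cdot(d(\mu)-d(\nu))}\phi_n(ba),
\]
and letting $n\to\infty$ gives the KMS$_1$ identity for $\phi$ on such pairs; since these analytic elements span a dense subspace, the density criterion noted in the preliminaries promotes this to full KMS$_1$-ness. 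Because the uniqueness step has already established $A_im^\phi=\rho(A_i)m^\phi$, Proposition~\ref{propsofm}\,\eqref{whenCKk} shows that $\phi$ factors through a state $\bar\phi$ of $C^*(\Lambda)$.

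For the final claim that $\bar\phi$ is the unique KMS state of $(C^*(\Lambda),\alpha)$, any KMS state at inverse temperature $\beta'$ pulls back through the quotient map to a KMS$_{\beta'}$ state of $(\T C^*(\Lambda),\alpha)$; Corollary~\ref{restrictbeta2} forces $\beta'=1$, and uniqueness on $\T C^*(\Lambda)$ identifies the lift with $\phi$, so the original state coincides with $\bar\phi$. Once the left-eigenvector upgrade in the second paragraph is in hand, the descent to $C^*(\Lambda)$ is formal and the weak*-compactness construction of existence is routine.
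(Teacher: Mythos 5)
Your proof is correct and follows essentially the same route as the paper: reduction to the vertex projections via Proposition~\ref{idKMSbetak}\,\eqref{preak}, subinvariance forcing $m^\phi=x^\Lambda$, a weak*-limit from $\beta_n\downarrow 1$ for existence, and Proposition~\ref{propsofm}\,\eqref{whenCKk} plus Corollary~\ref{restrictbeta2} for the $C^*(\Lambda)$ statements. Your only deviations are cosmetic: you prove the subinvariance-to-invariance upgrade directly with the left Perron--Frobenius eigenvector where the paper cites the last assertion of Seneta's Theorem~1.6, and you verify that the limit state is KMS$_1$ by passing to the limit in the KMS identity (the alternative the paper itself notes via \cite[Proposition~5.3.23]{BR}) rather than by choosing $\epsilon_n$ so that each $\phi_n$ satisfies the explicit formula \eqref{phin}.
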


\begin{proof}
Choose a decreasing sequence $\{\beta_n\}$ such that $\beta_n\to 1$. For each $n$, the vector $x^\Lambda$
is a probability measure satisfying $A_ix^\Lambda = \rho(A_i) x^\Lambda \leq e^{\beta_n} x^\Lambda$ for all $i$. Since
$\beta_n\ln\rho(A_i) > \ln\rho(A_i)$, applying Theorem~\ref{mainthmk}\,\eqref{ck} to
$\epsilon := \prod_{i=1}^k(1-e^{-\beta_n}A_i)x^\Lambda$ gives a KMS$_{\beta_n}$ state $\phi_n$ satisfying
\begin{equation}\label{phin}
    \phi_n(t_\mu t^*_\nu) = \delta_{\mu,\nu} \rho(\Lambda)^{-\beta_n d(\mu)}x^\Lambda_{s(\mu)}.
\end{equation}
Since the state space of $\T C^*(\Lambda)$ is weak* compact we may assume by passing to a
subsequence that the sequence $\{\phi_{n}\}$ converges to a state $\phi$. Letting $n\to\infty$
in~\eqref{phin} shows that $\phi$ satisfies~\eqref{KMS1}.
Thus Proposition~\ref{idKMSbetak}\,\eqref{preak} implies that $\phi$ is a KMS${}_{1}$ state. (We could
also deduce that the limit is a KMS$_1$ state by applying \cite[Proposition~5.3.23]{BR}.)

To establish uniqueness, suppose that $\psi$ is a KMS$_{1}$ state for $(\T C^*(\Lambda),\alpha)$.
Taking $K=\{i\}$ in Proposition~\ref{propsofm}\,\eqref{ak} implies that $m^\psi=(\psi(q_v))$ is a
probability measure satisfying $(1- e^{-\ln\rho(A_i)}A_i)m^\psi\geq 0$, or equivalently
$A_im^\psi\leq \rho(A_i)m^\psi$. Now the last assertion in Theorem~1.6 of \cite{Seneta} implies
that $m^\psi$ is the Perron-Frobenius eigenvector $x^\Lambda$. Since the $\ln\rho(A_i)$ are not rationally related, Proposition~\ref{idKMSbetak}\,\eqref{preak} implies that 
\begin{equation*}
\psi(t_\mu t^*_\nu) = \delta_{\mu,\nu} \rho(\Lambda)^{-d(\mu)} x_{s(\mu)}\quad\text{ for all }\mu,\nu \in \Lambda.
\end{equation*}
This formula and \eqref{KMS1} imply that $\psi=\phi$.

Since $m^\phi = x^\Lambda$ satisfies $A_i m^\phi = \rho(A_i)m^\phi$ for each $i \le k$, Proposition~\ref{propsofm}\,\eqref{whenCKk} implies that $\phi$ factors through a state of $C^*(\Lambda)$.

To see that $\bar\phi$ is the only KMS state of $(C^*(\Lambda),\alpha)$, we suppose $\psi$ is a KMS$_\beta$ state of $(C^*(\Lambda),\alpha)$. Then $\psi\circ q$ is a KMS state of $\T C^*(\Lambda)$, and
Proposition~\ref{propsofm}\,(\ref{whenCKk}) shows that $A_im^{\psi\circ q}=\rho(A_i)^{\beta}m^{\psi\circ q}$. The Perron-Frobenius theorem implies that
$\rho(A_i)^{\beta}=\rho(A_i)$ for all $i$; since the independence of the $\ln \rho(A_i)$ implies that at least one $\rho(A_i)$ is not equal to $1$, we deduce that $\beta=1$. Thus the uniqueness of the KMS$_1$ state of $(\T C^*(\Lambda),\alpha)$ implies that $\psi=\bar\phi$.
\end{proof}

\begin{ex}\label{ex:periodic}
The hypothesis that the $\ln\rho(A_i)$ are rationally independent was not used in the proof of existence of the KMS$_1$ state, but was crucial in the proof of uniqueness. To see why, consider the $2$-graph $\Gamma$ with one vertex $v$, two blue edges $e$ and $f$ and
two red edges $a$ and $b$, and with factorisation property determined by
\begin{equation}\label{eq:factprop}
e a = a e,\quad e b = a f,\quad f a = b e\quad\text{ and }\quad f b = b f.
\end{equation}
(In the language of \cite{DavidsonPowerEtAl:JFA08}, this is
$\mathbb{F}^+_{\theta}$ for the permutation $\theta:(i,j) \mapsto (j,i)$ of $\{1,2\} \times
\{1,2\}$.)

Let $U = t_e t^*_a + t_f t^*_b \in C^*(\Gamma)$. Routine calculations show that
\begin{itemize}
\item[(a)] $t_e, t_f$ is a Cuntz family, $U$ is a unitary which commutes with $t_e$ and $t_f$, and $\{t_e, t_f, U\}$ generates $C^*(\Gamma)$; and
\item[(b)] if $S_1, S_2$ is another Cuntz family and $V$ another unitary such that $V$ commutes with the $S_i$, then $T_e := S_1$, $T_f := S_2$, $T_a := VS_1$ and $T_b := VS_2$ determines a Cuntz-Krieger $\Gamma$ family, and hence there is a homomorphism carrying $t_e$ to $S_1$, $t_f$ to $S_2$ and $U$ to $V$.
\end{itemize}
So $C^*(\Gamma)$ has the universal property of the tensor product, and there is an isomorphism $\pi$ of $C^*(\Gamma)$ onto $\Oo_2 \otimes C(\TT)$ such that $\pi(t_e)=s_1 \otimes 1$, $\pi(t_f)=s_2 \otimes 1$ and $\pi(U)=1 \otimes z$.

Corollary~B.12 of \cite{tfb} implies that states $\phi$ on $A$ and $\psi$ on $B$  give a state on $A \otimes B$ such that $(\phi \otimes \psi)(a \otimes b) = \phi(a)\psi(b)$. Let
$\phi$ be the unique KMS$_{\ln 2}$ state of $\Oo_2$. The isomorphism $\pi$ satisfies $\pi\circ\alpha_t=(\gamma_{2^{it}} \otimes
\operatorname{id})\circ\pi$, where $\gamma$ is the gauge action on $\Oo_2$. So for any state $\psi$ of
$C(\TT)$, the state $(\phi \otimes \psi) \circ \pi$ is a KMS$_1$ state for $C^*(\Gamma, \alpha)$.
In particular, $(C^*(\Gamma),\alpha)$ has many KMS$_1$ states.
\end{ex}

\section{Ground states and \texorpdfstring{KMS$_\infty$}{KMS-infinity}-states}\label{sec:ground}

\begin{prop}\label{prp:ground}
Let $\Lambda$ be a finite $k$-graph. Suppose $r \in (0, \infty)^k$ and define
$\alpha:\RR\to\Aut\T C^*(\Lambda)$ by $\alpha_t=\gamma_{e^{itr}}$. For each probability measure
$\epsilon$ on $\Lambda^0$ there is a unique KMS$_\infty$ state $\phi_\epsilon$ of $(\T C^*(\Lambda), \alpha)$
satisfying
\begin{equation}\label{defphie}
\phi_\epsilon(t_\mu t_\nu^*)=
\begin{cases}
\epsilon_v&\text{if $\mu=\nu=v \in \Lambda^0$} \\
0&\text{otherwise.}\\
\end{cases}
\end{equation}
The map $\epsilon\mapsto \phi_\epsilon$ is an affine isomorphism of the simplex of probability
measures on $\Lambda^0$ onto the ground states of $(\T C^*(\Lambda),\alpha)$. In particular, every
ground state of $(\T C^*(\Lambda),\alpha)$ is a KMS$_\infty$ state.
\end{prop}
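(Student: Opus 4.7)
The plan is to combine the ground-state characterization from Proposition~\ref{idKMSbetak}\,\eqref{groundk} with Theorem~\ref{mainthmk} to pin down the form of any ground state and then realize each such state as a weak$^*$ limit of KMS$_\beta$ states, thereby getting the construction, the parametrization, and the KMS$_\infty$ assertion in one package.

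First I would exploit $r\in(0,\infty)^k$: the condition $r\cdot d(\mu)>0$ is equivalent to $d(\mu)\neq 0$, i.e.\ $\mu\notin\Lambda^0$. So Proposition~\ref{idKMSbetak}\,\eqref{groundk} says that $\phi$ is a ground state iff $\phi(t_\mu t_\nu^*)=0$ whenever $\mu\notin\Lambda^0$ or $\nu\notin\Lambda^0$. Mutual orthogonality of the vertex projections then forces $\phi(t_\mu t_\nu^*)=0$ unless $\mu=\nu=v\in\Lambda^0$, and since $\sum_v q_v=1$ the numbers $\epsilon_v:=\phi(q_v)$ form a probability measure on $\Lambda^0$. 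Thus every ground state has the form~\eqref{defphie}, which both tells us what we must construct and immediately gives injectivity of $\epsilon\mapsto\phi_\epsilon$.

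Next I would construct $\phi_\epsilon$ and verify the KMS$_\infty$ property simultaneously. Fix a probability measure $\epsilon$ on $\Lambda^0$ and pick $\beta_n\to\infty$ with each $\beta_n$ larger than $\max_i (\ln\rho(A_i))/r_i$, so that Theorem~\ref{mainthmk} applies. Setting $\tilde\epsilon^{\,n}:=(\epsilon\cdot y^{\beta_n})^{-1}\epsilon$ with $y^{\beta_n}$ as in Theorem~\ref{mainthmk}\,\eqref{bk} puts $\tilde\epsilon^{\,n}\in\Sigma_{\beta_n}$, and Theorem~\ref{mainthmk}\,\eqref{ck} gives a KMS$_{\beta_n}$ state $\phi_n$ with
\[
\phi_n(t_\mu t_\nu^*)=\delta_{\mu,\nu}\,e^{-\beta_n r\cdot d(\mu)}\,m^n_{s(\mu)},\qquad m^n:=\prod_{i=1}^k(1-e^{-\beta_n r_i}A_i)^{-1}\tilde\epsilon^{\,n}.
\]
Using the expansion $m^n=\sum_{p\in\NN^k}e^{-\beta_n r\cdot p}A^p\tilde\epsilon^{\,n}$ from Lemma~\ref{resolventk} and the fact that $e^{-\beta_n r\cdot p}\to 0$ for $p\neq 0$, dominated convergence gives $y^{\beta_n}\to(1,\ldots,1)$, hence $\tilde\epsilon^{\,n}\to\epsilon$, and $m^n\to\epsilon$. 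Thus $\phi_n(t_\mu t_\nu^*)\to\delta_{\mu,\nu}\delta_{d(\mu),0}\epsilon_{s(\mu)}$, which is exactly the value prescribed by~\eqref{defphie}. Since the elements $t_\mu t_\nu^*$ span a dense subspace and $\|\phi_n\|=1$, weak$^*$ compactness of the state space and uniqueness of the limit values on the spanning set force $\phi_n\to\phi_\epsilon$ weak$^*$ for a unique state $\phi_\epsilon$ satisfying~\eqref{defphie}; by construction $\phi_\epsilon$ is a KMS$_\infty$ state, and by the converse in Proposition~\ref{idKMSbetak}\,\eqref{groundk} it is a ground state.

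Finally, the map $\epsilon\mapsto\phi_\epsilon$ is affine by linearity of~\eqref{defphie}, injective since $\epsilon_v=\phi_\epsilon(q_v)$, and surjective by the first step, so it is an affine bijection of the probability simplex on $\Lambda^0$ onto the ground states; weak$^*$ continuity at each spanning $t_\mu t_\nu^*$ together with compactness of the simplex upgrade it to an affine homeomorphism. I expect the main technical wrinkle to be the dominated-convergence step for $y^{\beta_n}$ and $m^n$: one needs the tails $\sum_{p\neq 0}e^{-\beta r\cdot p}A^p$ to be uniformly small for large $\beta$, which is ensured by the operator-norm convergence in Lemma~\ref{resolventk}. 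Everything else is bookkeeping with Proposition~\ref{idKMSbetak} and Theorem~\ref{mainthmk}.
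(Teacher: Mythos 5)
Your proposal is correct and follows essentially the same route as the paper: both characterise ground states via Proposition~\ref{idKMSbetak}\,\eqref{groundk} (which with $r\in(0,\infty)^k$ forces the form \eqref{defphie}), and both construct $\phi_\epsilon$ as a weak$^*$ limit of the KMS$_{\beta_n}$ states of Theorem~\ref{mainthmk}\,\eqref{ck} with a suitably normalised $\epsilon$, using $y^{\beta_n}\to\mathbf{1}$ and $\prod_i(1-e^{-\beta_n r_i}A_i)^{-1}\to I$. The only differences are cosmetic (a global scalar normalisation instead of the paper's entrywise $\epsilon^j_v=\epsilon_v(y^j_v)^{-1}$, and noting that the full sequence converges rather than passing to a subsequence).
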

\begin{proof}
Suppose $\epsilon$ is a probability measure on $\Lambda^0$. Choose $\beta_j\in (0,\infty)$ such that $\beta_j\to\infty$ and $e^{\beta_j r_i} > \rho(A_i)$ for every $i$ and $j$. For each $j$, let $y^j \in [1,\infty)^{\Lambda^0}$ be the vector of
Theorem~\ref{mainthmk}\,\eqref{bk} for $\beta=\beta_j$, so that $y^j$ has entries $y^j_v = \sum_{\mu\in \Lambda
v}e^{-\beta_j r\cdot d(\mu)}$. Define $\epsilon^j \in (0,\infty)^{\Lambda^0}$ by
$\epsilon^j_v:=\epsilon_v(y_v^j)^{-1}$, and let $\phi_j$ be the KMS$_{\beta_j}$ state
$\phi_{\epsilon^j}$ of $(\T C^*(\Lambda),\alpha)$ described in Theorem~\ref{mainthmk}\,\eqref{ck}.
Since the state space is compact, we may assume by passing to a subsequence that the $\phi_j$
converge to a state $\phi_\epsilon$.

Set $m^j := \prod^k_{i=1}(I - e^{-\beta_jr_i}A_i)^{-1} \epsilon^j$, and fix $\mu, \nu \in \Lambda$.
Then
\[
\phi_j(t_\mu t^*_\nu) = \delta_{\mu,\nu} e^{-\beta_j r \cdot d(\mu)} m^j_{s(\mu)}.
\]
This is always $0$ if $\mu\not=\nu$, so suppose that $\mu = \nu$. If $d(\mu) \not= 0$, then
$e^{-\beta_j r \cdot d(\mu)} \to 0$ because each $r_i > 0$, and hence $\phi_\epsilon(t_\mu t^*_\nu)
= \lim_j \phi_j(t_\mu t^*_\mu) = 0$.  So suppose that $\mu = \nu = v$ is a vertex. An application of the dominated convergence theorem
shows that $y^j_v\to 1$ as $j\to\infty$. Thus $\epsilon^j_v \to \epsilon_v$. Since $\prod^k_{i=1}(I
- e^{-\beta_jr_i}A_i)^{-1} \to I$ in the operator norm as $j \to \infty$,
\[
\phi_\epsilon(q_v) = \lim_j \phi_j(q_v) = \lim_j \Big(\prod^k_{i=1}(I - e^{-\beta_jr_i}A_i)^{-1} \epsilon\Big)_v
    = \epsilon_v.
\]
So $\phi_\epsilon$ satisfies \eqref{defphie}.

Proposition~\ref{idKMSbetak}\,(\ref{groundk}) implies that if $\phi$ is a ground state and $\epsilon_v:=\phi(q_v)$, then $\phi=\phi_\epsilon$. Thus $\epsilon\mapsto \phi_\epsilon$ maps the simplex of probability measures onto the ground states, and it is affine and injective. Since each $\phi_\epsilon$ is by construction a KMS$_\infty$ state, every ground state is a KMS$_\infty$ state.
\end{proof}

\end{document}